\newtheorem{theorem}{Theorem}   
\newtheorem{lemma}{Lemma}
\newtheorem{problem}{Problem}
\newtheorem{proposition}{Proposition}
\newtheorem{example}{Example}
\newtheorem{remark}{Remark}
\newtheorem{assumption}{Assumption}
\DeclareMathOperator{\rank}{rank}
\DeclareMathOperator{\im}{im}
\newcommand{\matH}{\mathcal{H}}
\newcommand{\matT}{\mathcal{T}}
\newcommand{\matAC}{\mathcal{AC}}
\newcommand{\matPC}{\mathcal{PC}}
\begin{document}

\begin{frontmatter}

\title{Online experiment design for continuous-time systems\\ 
	using generalized filtering} 
\vspace*{-3ex}
	
\thanks[footnoteinfo]{This paper was not presented at any IFAC 
meeting. 
This work was partially supported by Jiangsu Provincial Scientific Research Center of Applied Mathematics grant BK20233002. 
H. J. van Waarde acknowledges financial support by the Dutch Research Council under the NWO Talent Programme Veni Agreement (VI.Veni.222.335).
Corresponding author: Simone Baldi. }

\author[BI,cyber]{Jiwei Wang}\ead{jiwei.wang@rug.nl},    
\author[math]{Simone Baldi}\ead{simonebaldi@seu.edu.cn},               
\author[BI]{Henk J. van Waarde}\ead{h.j.van.waarde@rug.nl}  

\address[BI]{Bernoulli Institute for Mathematics, Computer Science and Artificial Intelligence, University of Groningen, The Netherlands}  
\address[cyber]{School of Cyber Science and Engineering, Southeast University, China}             
\address[math]{School of Mathematics, Southeast University, China}        

\vspace*{-2ex}


\begin{keyword}                           
Continuous-time systems, experiment design, fundamental lemma, generalized filtering, system identification.
\end{keyword}                             
                                          
\begin{abstract}
The goal of experiment design is to select the inputs of a dynamical system in such a way that the resulting data contain sufficient information for system identification and data-driven control.
This paper investigates the problem of experiment design for continuous-time systems under piecewise constant input signals.
To obviate the need for measuring time derivatives of (data) trajectories, we introduce a generalized filtering framework.
Our main result is to establish conditions on the input and the filter functions under which the filtered data are informative for system identification, i.e., they satisfy a certain rank condition.
We assume that the filter functions are piecewise continuously differentiable, encompassing several filter functions that have appeared in the literature. 
Building on the proposed filtering framework, we develop an experiment design procedure, adapted from experiment design results for discrete-time systems, where the piecewise constant input signal is designed online during system operation. 
This method is shown to be sample efficient, in the sense that it deals with the least possible number of filtered data samples for system identification.
Notably, the designed input signal is such that the data capture the system's dynamics at all times between sampling instants, thus establishing a connection with a continuous-time version of \mbox{Willems et al.'s fundamental lemma}.
\vspace*{-3ex}
\end{abstract}

\end{frontmatter}

\section{Introduction}

The growing complexity of modern engineering \mbox{systems} complicates modeling from first principles and motivates the use of data for modeling and control \cite{de2019formulas,van2023informativity,DBLSCT2025}.
Data-driven techniques utilize measurements collected from the system to model the system itself or to design controllers.
Data-driven techniques for discrete-time systems \cite{de2019formulas,van2023informativity,DBLSCT2025,van2020data,berberich2021data,rotulo2022online,wang2025necessary,eising2025data,zhao2025data} have a richer development as compared to their continuous-time counterparts \cite{berberich2021datacontinuous,rapisarda2023orthogonal,ohta2024data,eising2025sampling}, partly due to the challenge of accurately measuring time derivatives of trajectories in continuous-time systems.
One of the possible approaches to address this challenge is to apply one of the various filtering methods proposed in the literature \cite{ohsumi2002subspace,ohta2011stochastic,ohta2024data}.
Alternative methods involve, for instance, robust adaptation \cite{yucelen2011derivative}, estimation using discretization \cite{berberich2021datacontinuous} and orthogonal polynomial bases \cite{rapisarda2023orthogonal}.

While most of the existing literature performs analysis and control using \emph{given} datasets, other \mbox{studies} address a preliminary question---the design of input \mbox{signals}, with the aim of generating suitable data. 
This phase, known as experiment design, is crucial to obtain data containing sufficient information for system identification and data-driven control.
The theoretical \mbox{foundations} for experiment design are provided by the fundamental lemma by Willems et al., originally formulated for discrete-time systems \cite{willems2005note}. 
The lemma characterizes those input signals resulting in collected data that represent all possible trajectories of the system.
Different versions of this lemma for continuous-time systems have been proposed \cite{rapisarda2022persistency,lopez2022continuous,rapisarda2023fundamental,lopez2024input}, demonstrating from different perspectives that a single persistently exciting input signal can generate a system trajectory rich \mbox{enough} to reproduce all other trajectories.
Nevertheless, these results typically rely on direct access to a number of time derivatives of (data) trajectories.
The experiment design for continuous-time \mbox{systems} without the availability of time derivatives remains a challenge.

This naturally raises the question of whether such a problem can be addressed by leveraging on filtering methods like those in \cite{ohsumi2002subspace,ohta2011stochastic,ohta2024data}. 
This work gives a positive answer to this question by means of a generalized filtering framework encompassing existing filtering methods.
Answering this question is far from trivial, as no existing result in the literature guarantees that the data obtained after filtering will retain sufficient information for system identification and data-driven control.
In this work, we provide conditions under which the data obtained after filtering are \emph{informative} for system identification, i.e., they satisfy a certain rank condition that guarantees unique identifiability of the system.

Throughout this paper, we consider two types of data: \emph{sampled data}, obtained from the continuous-time system through piecewise constant input signals and sampling of the trajectories, and \emph{filtered data}, obtained by filtering the trajectories via generalized filter functions.
Based on this, we address two key problems: 
\begin{itemize}
	\item[1)] Find conditions on the trajectories and the filter functions under which the set of filtered data is informative for system identification.
	\item[2)] Develop an experiment design method that generates piecewise constant inputs and guarantees that	the resulting filtered data are informative.
\end{itemize}

In this work, we introduce a generalized filtering framework (see Eq. \eqref{fw}) which encompasses several filtering methods from the literature, see Example 1. 
Based on this framework, the main contribution of the paper is establishing a relation between the filtered data and the sampled data (see Eq. \eqref{relation}).
This relation enables us to state Proposition~\ref{Pranknm}, which provides conditions under which the filtered data are informative for system identification.
Notably, the conditions we present are missing in all filtering methods in the literature, which rely on assuming informativity of the filtered data, rather than deriving the conditions making informativity possible.
Building on the proposed filtering framework, we develop an online experiment design method for continuous-time systems based on the discrete-time setting in \cite{van2021beyond} (see Proposition~\ref{Tis}) and we show that such method guarantees the filtered data to be informative with the minimum possible number of samples (see Theorem \ref{Trank}).
Theorem~\ref{Tpe} shows that the designed input signal is such that the data capture the system's dynamics at all times between sampling instants, which establishes a connection between the proposed method and the continuous-time Willems et al.'s fundamental lemma in \cite{lopez2022continuous}.

The remainder of this paper is organized as follows.
Section 2 introduces the generalized filtering framework.
The problems addressed in the paper are formulated in Section 3.
Section 4 investigates the relation between the filtered data and the sampled data.
The proposed online experiment design method is discussed in Section 5.
Section 6 provides a numerical illustration of the theoretical development.
Finally, Section 7 concludes the paper.

\emph{Notation:}
We denote the set of \emph{non-negative integers} by $ \mathbb{Z}_+ $, the set of \emph{positive integers} by $ \mathbb{N} $, and the set of \emph{non-negative real numbers} by $ \mathbb{R}_{+} $.
Given a matrix \mbox{$ A \in \mathbb{R}^{n\times m} $}, its \textit{Moore-Penrose pseudo-inverse} is denoted by $ A^{\dagger} $.
The \textit{identity matrix} with appropriate dimensions is denoted by $ I $.
Given vectors $ v_k,v_{k+1},\dots,v_\ell\in\mathbb{R}^n $ with $ k \leq \ell $, we define \mbox{$ v_{[k,\ell]} := [v_k\ v_{k+1}\ \cdots\ v_\ell] \in\mathbb{R}^{n\times (\ell-k+1)} $}.

\section{Generalized filtering and filtered data}\label{SFF}

For a given time duration $\mathcal{T} > 0$, we introduce $ M $ filter functions \mbox{$ g_\ell\!: [0,\mathcal{T})\!\to\!\mathbb{R} $} for $ \ell \in \{1,2,\dots,M\} $.
We let $0 = t_0 < t_1 < t_2 < \cdots < t_q = \mathcal{T}$ and assume that $g_\ell$ is continuously differentiable on $[t_{j-1},t_{j})$ for $j = 1,2,\dots,q$. Thus, $g_\ell$ is a piecewise continuously differentiable function. In addition, we assume that $g_\ell(t_j^-) := \lim_{t\uparrow t_j}g_\ell(t)$ exists for all $j = 1,2,\dots,q$.
The functions $ g_\ell $ are used to produce filtered data $ w^{\rm f}_\ell $ from a given signal $ w:[0,\mathcal{T})\to\mathbb{R}^n $.
More precisely, we define
\begin{equation}\label{fw}
w^{\rm f}_\ell:= \int_{0}^{\matT}g_\ell(t)w(t)dt,\quad \ell\in\{1,2,\dots,M\}.
\end{equation}
\begin{remark}
	Note that \eqref{fw} encompasses several filtering methods from the literature.
	For instance, in \cite{ohta2024data}, the following set of filtered data was considered:
	\begin{equation*}
	\mathcal{D}=\{\langle S_\tau h,w\rangle\!:\!h\!\in\!\{h_1,h_2,\dots,h_p\},\tau\!\in\!\{s_1,s_2,\dots,s_r\}\},
	\end{equation*}
	where
	\begin{equation*}
	\langle S_\tau h, w \rangle = \int_\tau^\matT h(t-\tau) w(t) \, dt,
	\end{equation*}
	and $h_j:[0,\mathcal{T})\to\mathbb{R}$ and $s_k \in [0,\mathcal{T})$ for $j = 1,2,\dots,p$ and $k = 1,2,\dots,r$.
	Indeed, \eqref{fw} can accommodate $ \mathcal{D} $ by setting $ M=pr $ and defining
	\begin{equation*}
	g_{(k+(j-1)r)}(t)=
	\left\lbrace 
	\begin{aligned}
	&h_j(t-t_k) && \text{if } t\in [t_k,\matT),\\
	&0 && \text{otherwise,}\\
	\end{aligned}
	\right.
	\end{equation*}
	for $ j\in\{1,2,\dots,p\} $ and $ k\in\{1,2,\dots,r\} $.
\end{remark}

The generality of \eqref{fw} is further elaborated in the next example.
\begin{example}\label{Eff}
	(Filter functions).
	Several filter functions $g_\ell$ considered in the literature can be used in \eqref{fw}.
	For time $ T>0 $ and filter parameter $ \rho>0 $, examples include:
	\begin{itemize}
		\item[i)] compactly supported test functions \cite{ohsumi2002subspace} such as:
		\begin{equation}\label{tf1}
		\!\!g_\ell(t)\!=\!\left\lbrace 
		\begin{aligned}
		&\!\rho(t\!-\!(\ell\!-\!1)T)^2(\ell T\!-\!t)^2 && \!\!\text{if } t\!\in\! [(\ell\!-\!1) T,\ell T),\\
		&\!0 && \!\!\text{otherwise;}\\
		\end{aligned}
		\right.
		\end{equation}
		\item[or]\vspace*{-1ex}
		\begin{equation}\label{tf2}
		g_\ell(t)=\left\lbrace 
		\begin{aligned}
		&e^{-\frac{\rho T^2}{T^2\text{\textminus}(t\text{\textminus}(\ell \text{\textminus}1) T)^2}} && \text{if } t\in [(\ell\!-\!1) T,\ell T),\\
		&0 && \text{otherwise;}\\
		\end{aligned}
		\right.	
		\end{equation}
		\item[ii)] first Laguerre basis function \cite{ohta2024data}:
		\begin{equation}\label{lbf} 
		g_\ell(t)=
		\left\lbrace 
		\begin{aligned}
		&\sqrt{2\rho}e^{\rho((\ell-1)T-t)} && \text{if } t\in [(\ell\!-\!1) T,\matT),\\
		&0 && \text{otherwise;}\\
		\end{aligned}
		\right.
		\end{equation} 
		\item[iii)] low-pass filter function \cite{roy2017combined}:
		\begin{equation}\label{lpf}  
		g_\ell(t)=
		\left\lbrace 
		\begin{aligned}
		&e^{\rho (t-\ell T)} && \text{if } t\in [0,\ell T),\\
		&0 && \text{otherwise.}\\
		\end{aligned}
		\right.
		\end{equation}
	\end{itemize}
\begin{figure}[t]
	\centering
	\subfigure[$ g_\ell(t)=(t-\ell+1)^2(\ell-t)^2 $]
	{\label{ftf1}
		\includegraphics[trim=21bp 261bp 60bp 268bp, clip, height=2.46cm]{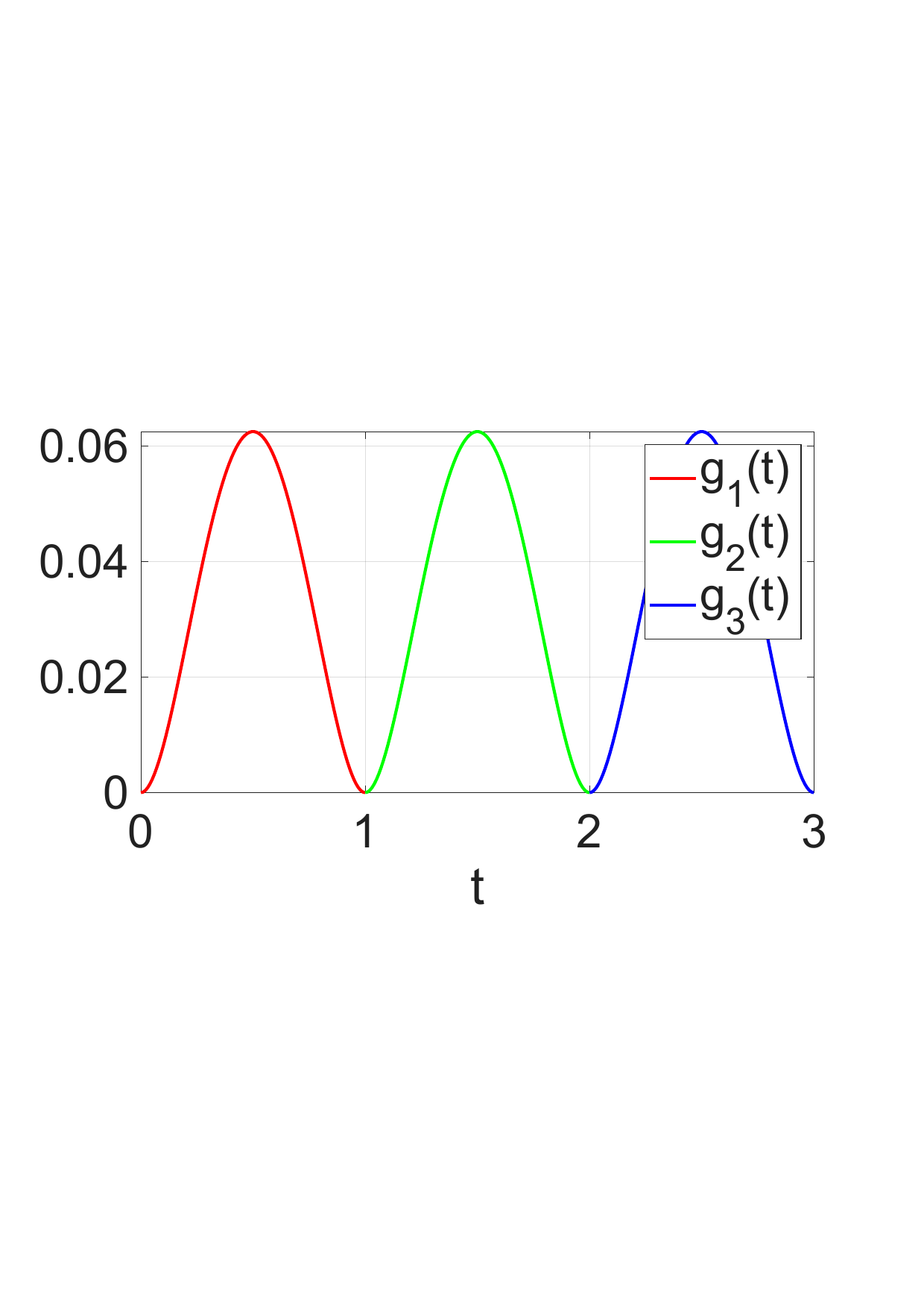}}
	\subfigure[$ g_\ell(t)=e^{\text{\textminus}\frac{1}{1\text{\textminus}(t\text{\textminus}\ell+1 )^2}} $]
	{\label{ftf2}
		\includegraphics[trim=21bp 261bp 60bp 268bp, clip, height=2.46cm]{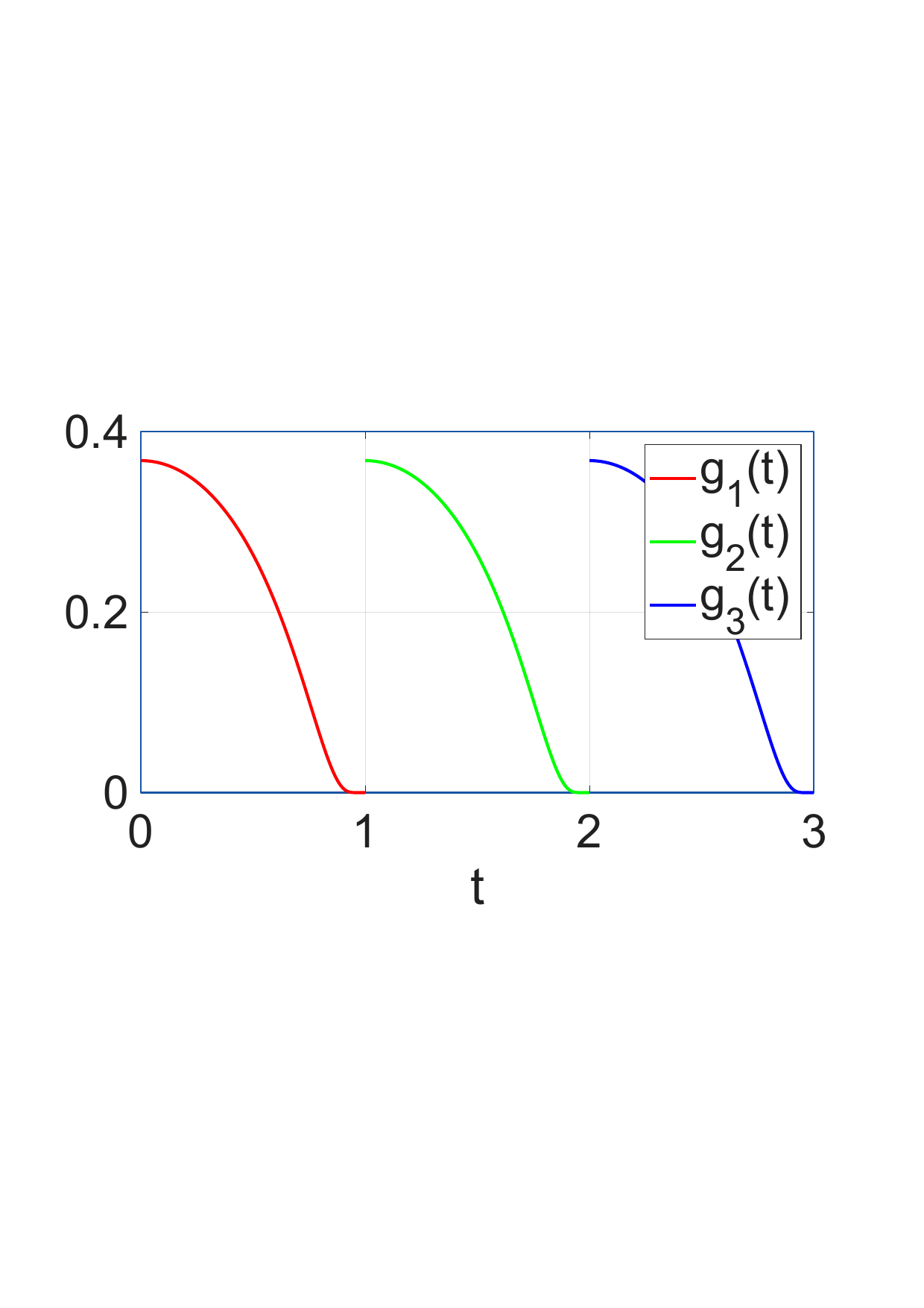}} 
	\subfigure[$ g_\ell(t)=\sqrt{2}e^{\ell\text{\textminus}1\text{\textminus} t} $]
	{\label{flbf}
		\includegraphics[trim=21bp 261bp 60bp 268bp, clip, height=2.46cm]{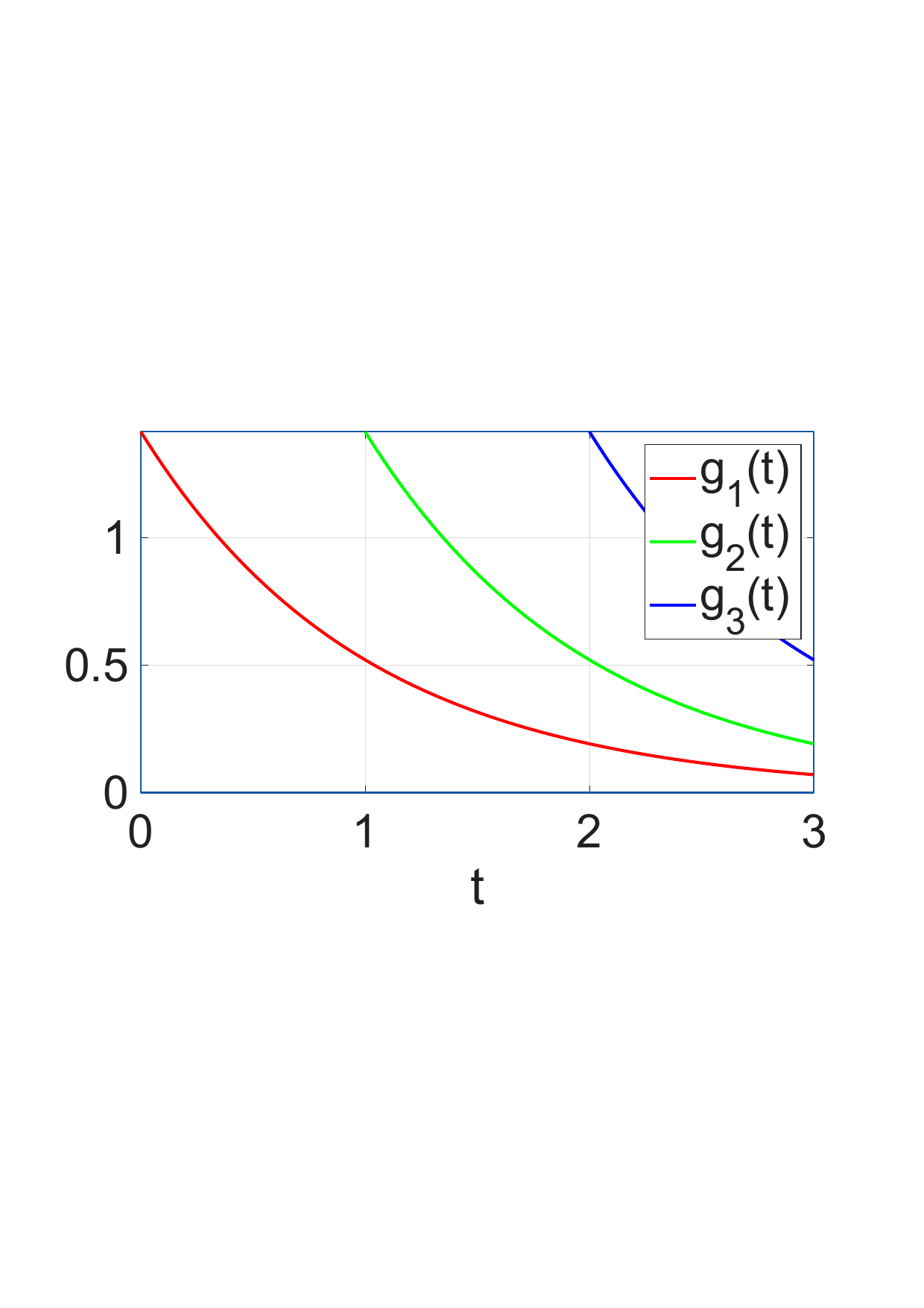}}
	\subfigure[$ g_\ell(t)=e^{t\text{\textminus}\ell} $]
	{\label{flpf}
		\includegraphics[trim=21bp 261bp 60bp 268bp, clip, height=2.46cm]{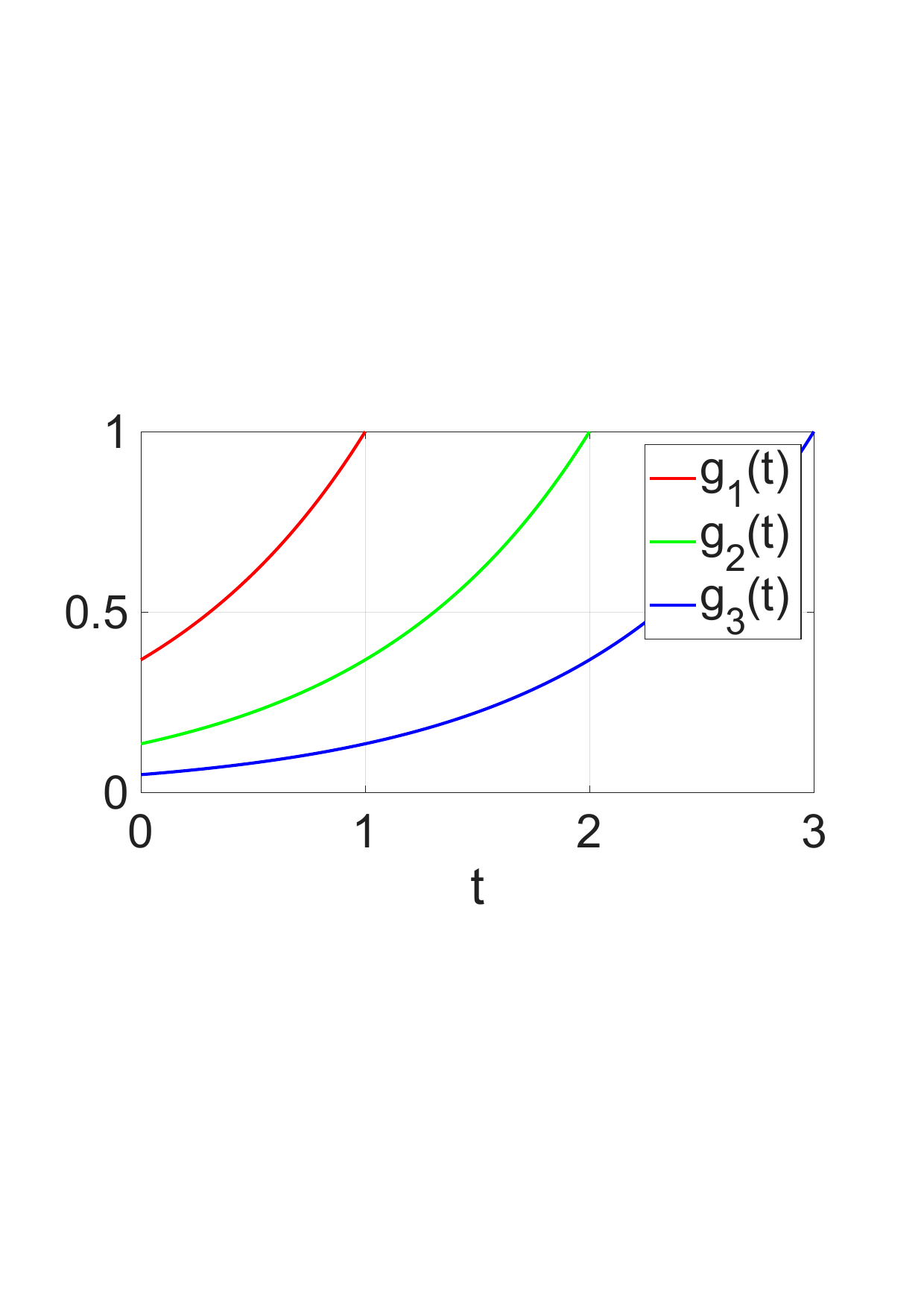}}
	\vspace{-0.2cm}
	\caption{Examples of filter functions for $ \ell \in \{1,2,3\} $:  (a)-(b) compactly supported test functions, (c) first Laguerre basis function, (d) low-pass filter function.}
	\label{fdo}
	\vspace{0.2cm}
\end{figure}
Figures \ref{ftf1}-\ref{flpf} show the four filter functions in \mbox{\eqref{tf1}-\eqref{lpf}} with parameters $ \rho=1 $, $ T=1 $ and $ \ell\in\{1,2,3\} $.
In the low-pass filtering case, the filtered data in \eqref{fw} can be interpreted as the state of a dynamical system sampled at certain time instants.
To illustrate this, consider the dynamical system
\begin{equation}\label{slpf}
\dot{w}^{\rm f}(t)=-\rho{w}^{\rm f}(t)+w(t),\quad w^{\rm f}(0)=0,
\end{equation}
where $ w^{\rm f}(t)\in\mathbb{R}^n $ is the state of system \eqref{slpf}.
The solution of \eqref{slpf} is
\begin{equation}\label{few}
w^{\rm f}(t)=\int_{0}^{t} e^{-\rho (t-\tau)}w(\tau)d\tau.
\end{equation}
Hence, the following relation holds:
\begin{equation}\label{lpfw}
w^{\rm f}(\ell T)=\int_{0}^{\ell T}\! e^{\rho (\tau-\ell T)}w(\tau)d\tau =\int_{0}^{\matT}\! g_\ell(\tau)w(\tau)d\tau=w_\ell^{\rm f},
\end{equation}
where we have used \eqref{fw}, and the specific low-pass filtering function $g_\ell$ in \eqref{lpf}. 
We conclude that the filtered data $ w_\ell^{\rm f} $ obtained from the low-pass filter function \eqref{lpf} are samples of the state of the dynamical system \eqref{slpf} at specific time instants.
The low-pass filter function \eqref{lpf} is often employed to avoid the need for measuring the time derivative of (state) variables \cite{roy2017combined,cho2017composite}. 
Indeed, denote
\begin{equation*}\label{dw}
w^{\rm df}_\ell= \int_{0}^{\matT}g_\ell(\tau)\dot{w}(\tau)d\tau\in \mathbb{R}^n.
\end{equation*}
Then, using integration by parts, we obtain
\begin{equation}\label{calculatedx}
\begin{aligned}
w^{\rm df}_\ell
&=\int_{0}^{\ell T} e^{\rho (\tau-\ell T)}\dot{w}(\tau)d\tau\\
&=w(\ell T)-e^{-\rho\ell T}w(0)-\rho w^{\rm f}_\ell,
\end{aligned}
\end{equation}
i.e., the filtered time derivative data $ w^{\rm df}_\ell $ can be calculated without measuring the time derivative $\dot{w}$. 
\end{example}

\section{Problem formulation}\label{SP}

The filtering approach in \eqref{fw} is now applied to signals generated by a continuous-time system.
Consider the continuous-time linear time-invariant system
\begin{equation}\label{cs}
	\dot{x}(t) = Ax(t) + Bu(t), \quad	x(0) = x_0,
\end{equation}
where $ x(t) \in \mathbb{R}^n $ is the system state, $ u(t) \in \mathbb{R}^m $ is the control input, and $ A \in \mathbb{R}^{n\times n}, B \in \mathbb{R}^{n\times m} $ are  the system matrices, assumed to be unknown.
Throughout the paper, we assume that the pair $ (A, B) $ is controllable.

Let $ \matAC([0,\mathcal{T}), \mathbb{R}^n) $ be the space of absolutely continuous functions from $ [0,\mathcal{T}) $ to $ \mathbb{R}^n $, and let $ \matPC([0,\mathcal{T}), \mathbb{R}^m) $ be the \mbox{space} of piecewise continuous functions from $ [0,\mathcal{T}) $ to $ \mathbb{R}^m $.
We define the behavior \cite{willems1997introduction} of \eqref{cs} as 
\begin{align*}
\mathfrak{B}_\mathcal{T} := \{ (x,u) \in \matAC([0,\mathcal{T}),\mathbb{R}^n) \times \matPC([0,\mathcal{T}),\mathbb{R}^m) \mid \ \\
\eqref{cs} \text{  holds for almost all } t\in [0,\mathcal{T})\}.
\end{align*}
We call elements of $ \mathfrak{B}_\mathcal{T} $ (input-state) trajectories of \eqref{cs} on $ [0,\mathcal{T}) $.

Given a trajectory $(u,x) \in \mathfrak{B}_\mathcal{T}$, we denote the collection of \emph{filtered data} by
\begin{equation}\label{fisd}
\begin{aligned}
x^{\rm df}_{[1,M]}:=&\begin{bmatrix}
x^{\rm df}_1& x^{\rm df}_2& \cdots& x^{\rm df}_M
\end{bmatrix}
\in\mathbb{R}^{n\times M},\\
x^{\rm f}_{[1,M]}:=&\begin{bmatrix}
x^{\rm f}_1& x^{\rm f}_2& \cdots& x^{\rm f}_M
\end{bmatrix}
\in\mathbb{R}^{n\times M},\\
u^{\rm f}_{[1,M]}:=&\begin{bmatrix}
u^{\rm f}_1& u^{\rm f}_2& \cdots& u^{\rm f}_M
\end{bmatrix}\in\mathbb{R}^{m\times M},
\end{aligned}
\end{equation}
where
\begin{align}\label{fdx}
x^{\rm df}_\ell&= \int_{0}^{\matT}g_\ell(\tau)\dot{x}(\tau)d\tau\in \mathbb{R}^n, \\
\label{fx}
x^{\rm f}_\ell&= \int_{0}^{\matT}g_\ell(\tau)x(\tau)d\tau\in \mathbb{R}^n, \\
\label{fu}
u^{\rm f}_\ell&= \int_{0}^{\matT}g_\ell(\tau)u(\tau)d\tau\in \mathbb{R}^m,
\end{align} 
for $ \ell\in\{1,2,\dots,M\} $.
Then, \eqref{fdx} can be further rewritten using integration by parts:
\begin{equation}\label{xdf}
\begin{aligned}
&x^{\rm df}_\ell = \sum_{j=1}^{q} \int_{t_{j-1}}^{t_j^-}g_\ell(\tau)\dot{x}(\tau)d\tau = \\
&\! \sum_{j=1}^{q}\! \Bigg(g_\ell(t_j^-)x(t_j^-) \!-\! g_\ell(t_{j-1})x(t_{j-1})
\!-\! \int_{t_{j-1}}^{t_j^-}\! \dot{g}_\ell(\tau)x(\tau)d\tau\Bigg),
\end{aligned}
\end{equation}
where $ x(t_j^-) $ is well defined for all $ j\in\{1,2,\dots,q\} $ because $ x $ is absolutely continuous.
This shows that there is no need for measuring the time derivative $\dot{x}$ of the \mbox{state} in order to obtain the filtered time derivative $ x^{\rm df}_\ell $.

Since integration is a linear transformation, \eqref{fdx}-\eqref{fu} and \eqref{cs} yield the algebraic equation
\begin{equation}\label{fcs}
x^{\rm df}_{[1,M]}=
Ax^{\rm f}_{[1,M]}+
Bu^{\rm f}_{[1,M]}.
\end{equation}
In this work, we will consider the rank condition
\begin{equation}\label{rank}
	\rank\left(\begin{bmatrix}
	x^{\rm f}_{[1,M]}\\
	u^{\rm f}_{[1,M]}
	\end{bmatrix}\right)
	=n+m.
\end{equation}
The rank condition \eqref{rank} is crucial for system identification and data-driven control based on filtered data, see e.g., \cite{ohta2024data}. 
Indeed, as an example we note that \eqref{fcs} and \eqref{rank} imply that the system matrices $A$ and $B$ can be uniquely recovered from the data as
\begin{equation}\label{identify}
\begin{bmatrix}
A&B
\end{bmatrix}=
x^{\rm df}_{[1,M]}
\begin{bmatrix}
x^{\rm f}_{[1,M]}\\
u^{\rm f}_{[1,M]}
\end{bmatrix}^\dagger,
\end{equation}
i.e., the filtered data $ (x^{\rm df}_{[1,M]}, x^{\rm f}_{[1,M]}, u^{\rm f}_{[1,M]}) $ are informative for system identification.
Analogous versions of the rank condition in \eqref{rank} have also appeared for other types of data in \cite{de2019formulas,berberich2021datacontinuous,rapisarda2023orthogonal}, for example, for samples of continuous-time trajectories \cite{de2019formulas,berberich2021datacontinuous} and coefficient matrices corresponding to certain basis functions \cite{rapisarda2023orthogonal}.

In this paper, our goal will be to establish conditions on $ u $ such that the filtered data \eqref{fisd} resulting from the trajectory \mbox{$ (u,x) \in \mathfrak{B}_\mathcal{T} $} satisfy the rank condition \eqref{rank}.
This leads to the formulation of the following two problems.

\begin{problem}\label{Prank}
	Consider the trajectory $(u,x) \in \mathfrak{B}_\mathcal{T}$ and the filter functions $g_\ell$ for $\ell = 1,2,\dots,M$. Provide conditions on this trajectory and filter functions such that \eqref{rank} holds.
\end{problem}
\begin{problem} \label{Pdesign}
	Consider the filter functions $g_\ell$ for $\ell = 1,2,\dots,M$.
	Design the input signal $ u:[0,\mathcal{T})\to\mathbb{R}^m $ such that for every initial state $x(0) = x_0 \in \mathbb{R}^n$, the resulting trajectory $(u,x) \in \mathfrak{B}_\mathcal{T}$ is such that the filtered data $ (u^{\rm f}_{[1,M]},x^{\rm f}_{[1,M]}) $ satisfy \eqref{rank}.
\end{problem}

Note that for \eqref{rank} to hold, it is necessary that $M \geq n+m$. 
In this paper, we are especially interested in the case that $M = n+m$ because it corresponds to the minimum number of filtered data samples required to achieve \eqref{rank}. 
It will follow from the results of this paper that there indeed exist inputs achieving \eqref{rank} for $M = n+m$.

In the next two sections, we solve Problem~\ref{Prank} and Problem~\ref{Pdesign}, respectively.

\section{Rank condition for filtered data}\label{Smain}

We now let $\mathcal{T} := NT$, $N \in \mathbb{N}$ being the number of samples and $T>0$ the sampling time.
We consider a piecewise constant input trajectory \mbox{$u: [0,\mathcal{T}) \to \mathbb{R}^m$}, defined by
\begin{equation}\label{mu}
u(t+kT)=\mu_k,
\end{equation}
where $t \in [0,T)$ and $\mu_k \in \mathbb{R}^m$ for $k = 0,1,\dots,N-1$.
For the trajectory $ (u,x)\in\mathfrak{B}_{NT} $, we consider \emph{sampled data} at the time instants $t+kT$, captured in the matrices
\begin{equation}\label{ssd}
\chi_{[0,N-1]}(t):=\begin{bmatrix}
\chi_0(t)& \chi_1(t)& \cdots& \chi_{N-1}(t)
\end{bmatrix}
\in\mathbb{R}^{n\times N},
\end{equation}
where $ \chi_k(t)=x(t+kT) $.
We use the shorthand notation $\chi_{[0,N-1]} := \chi_{[0,N-1]}(0)$ and we denote
\begin{equation}\label{isd}
\mu_{[0,N-1]}:=\begin{bmatrix}
\mu_0& \mu_1& \cdots& \mu_{N-1}
\end{bmatrix}\in\mathbb{R}^{m\times N}.
\end{equation}


\subsection{Continuous-time fundamental lemma}\label{Spiecewise}

For a piecewise constant input, exact discretization of the continuous-time system \eqref{cs} with sampling time $T$ results in the discrete-time dynamics \cite{chen2012optimal}
\begin{equation}\label{ds}
\chi_{k+1}=A_{T}\chi_k+B_{T}\mu_k,
\end{equation}
where
\begin{equation}\label{FG_T}
A_{T}:=e^{AT}, \quad B_{T}:=\int_{0}^{T}e^{At}Bdt.
\end{equation}
To ensure that the discrete-time system \eqref{ds} preserves the controllability property of the original continuous-time system \eqref{cs}, the following assumption from the literature is considered.
\begin{assumption} \label{Apathological}
	Let $ \lambda_1, \lambda_2, \dots, \lambda_n $ be the eigenvalues of $ A $.
	Then, for any distinct $ j,l \in \{1,2,\dots,n\} $ and $ q\in\mathbb{N} $, the sampling time $ T $ is such that
	\begin{equation}\label{pathological}
	\lambda_{j} - \lambda_{l} \neq \frac{2q\pi}{T}i,
	\end{equation}
	where $ i $ is the imaginary unit.
\end{assumption}

As highlighted in \cite{chen2012optimal}, Assumption 1 is mild in the sense that \eqref{pathological} holds for all $T$ except for a set of measure zero.
Based on Assumption~\ref{Apathological}, we recall a lemma asserting that controllability is preserved under discretization. 
\begin{lemma} \label{LCO}
	(\cite[Theorem 3.2.1]{chen2012optimal}).
	Let Assumption~\ref{Apathological} hold.
	Then $(A_T,B_T)$ is controllable if $(A,B)$ is controllable.
\end{lemma}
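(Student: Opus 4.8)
The plan is to argue by contraposition using the Popov--Belevitch--Hautus (PBH) test. Suppose $(A_T,B_T)$ is not controllable; then there exist a scalar $z\in\mathbb{C}$ and a nonzero row vector $v^{*}$ with $v^{*}A_T=zv^{*}$ and $v^{*}B_T=0$. Since $A_T=e^{AT}$, the spectral mapping theorem gives $z=e^{\lambda T}$ for some eigenvalue $\lambda$ of $A$. The goal is then to extract from $v$ a genuine left eigenvector of $A$ that also annihilates $B$, which by PBH would contradict controllability of $(A,B)$.

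The first key step is to show that $v^{*}$ is in fact a left eigenvector of $A$, i.e. $v^{*}A=\lambda v^{*}$. Here Assumption~\ref{Apathological} enters: the map $s\mapsto e^{sT}$ is injective on the spectrum of $A$, because $e^{\lambda T}=e^{\mu T}$ for distinct eigenvalues $\lambda,\mu$ would force $\lambda-\mu=\tfrac{2q\pi}{T}i$ for some nonzero integer $q$, which \eqref{pathological} forbids. Consequently, decomposing $\mathbb{C}^{n}$ into the generalized eigenspaces of $A$, the vector $v^{*}$ must be supported on the single generalized eigenspace attached to $\lambda$. On that subspace $A-\lambda I$ is nilpotent, and since $e^{AT}-e^{\lambda T}I=Te^{\lambda T}(A-\lambda I)+O((A-\lambda I)^{2})=(A-\lambda I)G$ with $G$ invertible (the derivative $Te^{\lambda T}$ of $s\mapsto e^{sT}$ never vanishes and the remaining terms are nilpotent), the relation $v^{*}(A_T-zI)=0$ is equivalent to $v^{*}(A-\lambda I)=0$. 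This yields $v^{*}A=\lambda v^{*}$.

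It remains to convert $v^{*}B_T=0$ into $v^{*}B=0$. Writing $B_T=\big(\int_0^{T}e^{At}dt\big)B=\phi(A)B$ with $\phi(s)=\int_0^{T}e^{st}dt$, and using $v^{*}A=\lambda v^{*}$, one obtains $v^{*}B_T=\phi(\lambda)\,v^{*}B$, where $\phi(\lambda)=(e^{\lambda T}-1)/\lambda$ for $\lambda\neq0$ and $\phi(0)=T$. The crucial point---and the step I expect to be the main obstacle---is to verify $\phi(\lambda)\neq0$. One has $\phi(\lambda)=0$ only if $\lambda=\tfrac{2k\pi}{T}i$ for some nonzero integer $k$; but because $A$ is real, such a purely imaginary eigenvalue comes paired with its conjugate $-\tfrac{2k\pi}{T}i$, and the difference $\tfrac{4k\pi}{T}i$ between these two distinct eigenvalues again violates \eqref{pathological}. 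Hence $\phi(\lambda)\neq0$ and $v^{*}B=0$, so $(v,\lambda)$ fails the PBH test for $(A,B)$, contradicting controllability of $(A,B)$.

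The delicate issue to get right is precisely this interplay between Assumption~\ref{Apathological} and the realness of $A$: the hypothesis only restricts \emph{differences} of eigenvalues, so ruling out an isolated eigenvalue lying on the lattice $\tfrac{2\pi}{T}i\,\mathbb{Z}$---equivalently, guaranteeing that $\phi(A)$ is nonsingular---relies essentially on the conjugate symmetry of the real spectrum. Everything else reduces to standard functional calculus for $e^{AT}$ and $\int_0^T e^{At}\,dt$ as analytic functions of $A$.
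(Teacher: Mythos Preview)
The paper does not supply its own proof of Lemma~\ref{LCO}; the statement is simply quoted from \cite[Theorem~3.2.1]{chen2012optimal}, so there is nothing in the paper to compare your argument against.

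That said, your proposal is a correct and self-contained proof. The PBH route is the standard one for this type of result, and your treatment of the two nontrivial points is sound. First, the passage from a left eigenvector of $A_T$ to a left eigenvector of $A$: injectivity of $s\mapsto e^{sT}$ on $\sigma(A)$ (which is exactly what Assumption~\ref{Apathological} provides) ensures that the generalized eigenspace of $A_T$ at $e^{\lambda T}$ coincides with that of $A$ at $\lambda$, and on this subspace the factorization $A_T-e^{\lambda T}I=(A-\lambda I)G$ with $G$ invertible (since $N=A-\lambda I$ is nilpotent and $G=e^{\lambda T}T\sum_{k\geq 0}(NT)^k/(k+1)!$ is identity plus nilpotent, scaled by a nonzero constant) gives $v^{*}(A-\lambda I)=0$ from $v^{*}(A_T-e^{\lambda T}I)=0$. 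Second, the verification that $\phi(\lambda)=\int_0^T e^{\lambda t}\,dt\neq 0$: your observation that $\phi(\lambda)=0$ forces $\lambda=\tfrac{2k\pi}{T}i$ with $k\neq 0$, and that the realness of $A$ then produces the conjugate eigenvalue $-\tfrac{2k\pi}{T}i$ whose difference $\tfrac{4k\pi}{T}i$ violates \eqref{pathological}, is exactly the right way to close the argument. This last step is indeed where the hypothesis does real work, and you have identified it correctly.
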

Using this property, we recall the continuous-time fundamental lemma from \cite{lopez2022continuous}.
\begin{lemma} \label{Lrank}
	(\cite[Lemma 1]{lopez2022continuous}).
	Let Assumption~\ref{Apathological} hold.
	Consider the trajectory $ (u,x)\in\mathfrak{B}_{NT} $ with $u$ a piecewise constant input signal in \eqref{mu} satisfying $ \rank(\matH_{n+1}(\mu_{[0,N-1]}))=(n+1)m $, where
	\begin{equation*}
	\matH_{n+1}(\mu_{[0,N-1]})=
	\begin{bmatrix}
	\mu_0& \mu_1& \cdots& \mu_{N-n-1}\\
	\vdots&\vdots&&\vdots\\
	\mu_n& \mu_{n+1}& \cdots& \mu_{N-1}\\
	\end{bmatrix},
	\end{equation*}
	is the Hankel matrix of $\mu_{[0,N-1]}$ of depth $n+1$.
	Then,
	\begin{equation}\label{rankisc}
	\rank\left(\begin{bmatrix}
	\chi_{[0,N-1]}(t)\\
	\mu_{[0,N-1]}
	\end{bmatrix}\right)
	=n+m, \quad \forall t \in [0, T).
	\end{equation}
\end{lemma}
Lemma~\ref{Lrank} provides an experiment design method to achieve \eqref{rankisc} for the sampled data \eqref{ssd}-\eqref{isd}. 
Yet, we stress on the fact that nothing can be concluded about the rank condition \eqref{rank} for the \emph{filtered data}, as required to solve Problems~\ref{Prank} and \ref{Pdesign}.
Hence, we proceed by analyzing the relation between the sampled and the filtered data under piecewise	constant input signals.


\subsection{Rank relation between sampled and filtered data}\label{Sfilter}

We aim to establish a rank relation between the sampled and the filtered data.
To this end, we first analyze the filtered state and filtered input data.
By applying the piecewise constant input in \eqref{mu}, it follows from \eqref{fx} and \eqref{fu} that for any $ \ell\in\{1,2,\dots,M\} $,
\begin{align*}
x_\ell^{\rm f}=
& \sum_{j=0}^{N-1} \int_{(j-1)T}^{j T}g_\ell(\tau)x(\tau)d\tau\\
=& \sum_{j=0}^{N-1}\int_{0}^{T}g_\ell(\tau+j T)x(\tau+j T)d\tau\\
=& \sum_{j=0}^{N-1}\int_{0}^{T}
g_\ell(\tau+j T)e^{A\tau}d\tau\chi_j+\\
&\sum_{j=0}^{N-1}\int_{0}^{T}
g_\ell(\tau+j T)\int_{0}^{\tau}e^{A(\tau-s)}dsd\tau B\mu_j,\\
u_\ell^{\rm f}=
& \sum_{j=0}^{N-1} \int_{(j-1)T}^{j T}g_\ell(\tau)u(\tau)d\tau= \sum_{j=0}^{N-1}\int_{0}^{T}g_\ell(\tau+j T)d\tau \mu_j.
\end{align*}
Now, to build the relation between $ x_\ell^{\rm f} $ and $ \chi_{[0,N-1]} $, we make the following assumption.
\begin{assumption}\label{A}
	For all $ \tau\!\in\! [0,T) $ and \mbox{$ j\!\in\!\{0,1,\dots,N\!-\!1\} $}, the filter function $ g_\ell $ satisfies
	\begin{equation}\label{gf}
	g_\ell(\tau+jT)=g(\tau)f_\ell(jT),
	\end{equation} 
	where $ g:[0,T)\to[0,\infty) $ is a continuously differentiable function such that $ \int_{0}^{T}g(\tau)d\tau>0 $, and $ f_\ell: \mathbb{R}\to\mathbb{R} $ for $ \ell\in\{1,\dots,M\} $.
\end{assumption}
\begin{example}\label{Egf}
	(Validity of Assumption~\ref{A}).
	Suppose that $N \geq M$.
	A decomposition as in \eqref{gf} can be found for all the filter functions in Example~\ref{Eff}, namely,
	\begin{itemize}
		\item[i)] compactly supported test functions:
		\begin{equation}	\label{designtest}
		\begin{aligned}
		&g(t)=\rho t^2(T-t)^2 \text{ or } g(t)=e^{-\frac{bT^2}{T^2 \text{\textminus} t^2}},\\
		&f_\ell(t)=
		\left\lbrace 
		\begin{aligned}
		&1 && \text{if } t\in [(\ell-1) T,\ell T),\\
		&0 && \text{otherwise};\\
		\end{aligned}
		\right.
		\end{aligned}
		\end{equation}
		\item[ii)] first Laguerre basis function:
		\begin{equation}\label{designlb}
		\begin{aligned}
		&g(t)=\sqrt{2\rho}e^{-\rho t},\\
		&f_\ell(t)=
		\left\lbrace 
		\begin{aligned}
		&e^{\rho((\ell-1)T-t)} && \text{if } t\in [(\ell-1) T,\matT),\\
		&0 && \text{otherwise};\\
		\end{aligned}
		\right.
		\end{aligned}
		\end{equation}
		\item[iii)] low-pass filter function:
		\begin{equation}\label{designexp}
		\begin{aligned}
		&g(t)=e^{\rho t},\\
		&f_\ell(t)=
		\left\lbrace 
		\begin{aligned}
		&e^{\rho(t-\ell T)} && \text{if } t\in [0,\ell T),\\
		&0 && \text{otherwise}.\\
		\end{aligned}
		\right.
		\end{aligned}
		\end{equation}
	\end{itemize}
	For all cases, $ g(t)\!\geq\!0 $ for any $ t\!\in\![0,T) $ and $ \int_{0}^{T}\!g(\tau)d\tau\!>\!0 $.
\end{example}

Under Assumption~\ref{A}, we have 
\begin{align}\label{xf}
& x_\ell^{\rm f}= \sum_{j=0}^{N-1}\bar{A}\chi_jf_\ell(jT)+\sum_{j=0}^{N-1}\bar{B}\mu_jf_\ell(jT),\\
\label{uf}
& u_\ell^{\rm f}=
\sum_{j=0}^{N-1}\bar{G} \mu_jf_\ell(jT),
\end{align}
where
\begin{align*}
\bar{A}&=\int_{0}^{T}g(\tau)e^{A\tau}d\tau\in\mathbb{R}^{n\times n},\\ 
\bar{B}&=\int_{0}^{T} g(\tau)\int_{0}^{\tau}e^{A(\tau-s)}Bdsd\tau \in\mathbb{R}^{n\times m},\\
\bar{G}&=I \cdot \int_{0}^{T}g(\tau)d\tau \in\mathbb{R}^{m\times m}.\quad
\end{align*}
This results in the relation
\begin{equation}\label{relation}
\begin{bmatrix} x_{[1,M]}^{\rm f} \\ u_{[1,M]}^{\rm f} \end{bmatrix} = \bar{C} \bar{D} \bar{F},
\end{equation}
where
\begin{align*}
	\bar{C} =& 
	\begin{bmatrix}
	\bar{A}&\bar{B}\\
	0&\bar{G}
	\end{bmatrix}, \quad
	\bar{D} =
	\begin{bmatrix}
	\chi_{[0,N-1]}\\
	\mu_{[0,N-1]}
	\end{bmatrix},\\
	\bar{F} =&
	\begin{bmatrix}
	f_1(0)&f_2(0)&\cdots&f_M(0)\\
	f_1(T)&f_2(T)&\cdots&f_M(T)\\
	\vdots&\vdots&\ddots&\vdots\\
	f_1((N-1)T)&f_2((N-1)T)&\cdots&f_M((N-1)T)\\
	\end{bmatrix}\!.
\end{align*}
Now, to understand under which conditions the rank condition \eqref{rank} holds, we first present two lemmas.
\begin{lemma}\label{LC}
	We have that 
	\begin{equation}\label{lc}
	\rank(\bar{C})=n+m.
	\end{equation}
\end{lemma}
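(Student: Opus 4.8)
The plan is to exploit the block upper-triangular structure of $\bar{C}$. Since $\bar{C}$ is a square matrix of size $n+m$, the rank condition \eqref{lc} is equivalent to invertibility of $\bar{C}$, and because $\bar{C} = \begin{bmatrix}\bar{A} & \bar{B}\\ 0 & \bar{G}\end{bmatrix}$ is block upper-triangular we have $\det\bar{C} = \det\bar{A}\cdot\det\bar{G}$. Hence it suffices to prove that $\bar{A}$ and $\bar{G}$ are both nonsingular. This decouples the problem into two independent claims and, conveniently, makes the off-diagonal block $\bar{B}$ irrelevant to the argument.

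The factor $\bar{G} = \left(\int_0^T g(\tau)\,d\tau\right) I$ is immediate: Assumption~\ref{A} guarantees $\int_0^T g(\tau)\,d\tau > 0$, so $\bar{G}$ is a positive multiple of the identity and $\det\bar{G} = \left(\int_0^T g(\tau)\,d\tau\right)^m > 0$. The substance of the lemma therefore lies entirely in showing that $\bar{A} = \int_0^T g(\tau)e^{A\tau}\,d\tau$ is nonsingular. For this I would pass to the eigenstructure of $A$: triangularizing via the Jordan form $A = PJP^{-1}$ gives $\bar{A} = P\left(\int_0^T g(\tau)e^{J\tau}\,d\tau\right)P^{-1}$, and since $\int_0^T g(\tau)e^{J\tau}\,d\tau$ inherits the block-triangular shape of $e^{J\tau}$ with diagonal entries $c_\lambda := \int_0^T g(\tau)e^{\lambda\tau}\,d\tau$, one obtains $\det\bar{A} = \prod_i c_{\lambda_i}$, the product over the eigenvalues $\lambda_i$ of $A$ counted with multiplicity. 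Thus nonsingularity of $\bar{A}$ reduces to the scalar claim $c_\lambda \neq 0$ for every eigenvalue $\lambda$. When $\lambda$ is real this is easy: $g(\tau)e^{\lambda\tau}\ge 0$ and is strictly positive on a set of positive measure (because $g\ge 0$ with $\int_0^T g>0$), so $c_\lambda>0$.

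The main obstacle is the case of a complex eigenvalue $\lambda = a+bi$ with $b\neq 0$, where $c_\lambda = \int_0^T g(\tau)e^{a\tau}e^{ib\tau}\,d\tau$ is an integral of a nonnegative weight against an oscillating exponential and can, for badly tuned frequencies, vanish. Here I expect Assumption~\ref{Apathological} to be essential: since $A$ is real, $\bar{\lambda} = a-bi$ is also an eigenvalue, and the exclusion $\lambda-\bar{\lambda} = 2bi \neq 2q\pi i/T$ rules out precisely the resonant frequencies at which the oscillation completes whole periods over $[0,\mathcal{T})$. For the exponential-type filters of Example~\ref{Egf}, where $g(\tau)=c\,e^{\sigma\tau}$ with $\sigma\in\mathbb{R}$ (covering the low-pass and Laguerre cases), this can be made precise through the closed form $c_\lambda = c\,(e^{(\sigma+\lambda)T}-1)/(\sigma+\lambda)$, which vanishes only when $\sigma+\lambda\in (2\pi i/T)\mathbb{Z}\setminus\{0\}$; the corresponding $\lambda$ and $\bar{\lambda}$ would then differ by a nonzero multiple of $2\pi i/T$, contradicting Assumption~\ref{Apathological}. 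Establishing non-vanishing of $c_\lambda$ uniformly over all admissible filter functions $g$ is the delicate point of the proof, and I would handle the complex-conjugate eigenvalue pairs together, combining the positivity and oscillation structure of the weight with the pathological-sampling exclusion.
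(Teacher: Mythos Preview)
Your skeleton matches the paper's proof exactly: block upper-triangular structure reduces \eqref{lc} to the invertibility of $\bar A$ and $\bar G$; $\bar G$ is handled by the positivity of $\int_0^T g$ from Assumption~\ref{A}; and for $\bar A$ one passes to the Jordan form $A=P\Lambda P^{-1}$, observes that $\int_0^T g(\tau)e^{\Lambda\tau}d\tau$ is block upper-triangular with diagonal entries $c_{\lambda}=\int_0^T g(\tau)e^{\lambda\tau}d\tau$, and concludes $\det\bar A=\prod c_{\lambda_l}$. Up to and including the real-eigenvalue case your argument and the paper's coincide.

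The divergence is at complex eigenvalues. The paper does \emph{not} invoke Assumption~\ref{Apathological} anywhere in this proof; the lemma is established under Assumption~\ref{A} alone. The paper simply applies the same positivity step to every Jordan block: it asserts $g(\tau)e^{\lambda_l\tau}\ge 0$ on $[0,T)$ and, using that $g>0$ on some subinterval $[t_1,t_2]$, concludes $c_{\lambda_l}\ge\int_{t_1}^{t_2}g(\tau)e^{\lambda_l\tau}d\tau>0$. You are right to notice that this inequality is only meaningful when $\lambda_l$ is real; the paper's written argument does not separately treat complex eigenvalues. So your instinct that the complex case is the delicate point is well founded, but your proposed remedy---bringing in the non-pathological sampling condition---is not the route the paper takes, and as you yourself acknowledge, it does not extend from the exponential filters to the full class of $g$ allowed by Assumption~\ref{A}. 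In that sense both your proposal and the paper's proof leave the complex-eigenvalue case short of a complete argument; the difference is that the paper proceeds as if the real-eigenvalue positivity bound applied uniformly, whereas you flag the issue and attempt (unsuccessfully, in general) to close it via Assumption~\ref{Apathological}.
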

\begin{proof}
	We first show that $ \bar{A} $ is nonsingular.
	We consider the Jordan normal form of $A$, i.e., $ A=P\Lambda P^{-1} $, where $P \in \mathbb{R}^{n \times n}$ is nonsingular,
	\begin{equation*}
	\Lambda=\!
	\begin{bmatrix}
	\Lambda_1&0&\cdots&0\\
	0&\Lambda_2&\cdots&0\\ 
	\vdots&\vdots&\ddots&\vdots\\
	0&0&\cdots&\Lambda_p
	\end{bmatrix}\!,\
	\Lambda_l=\!
	\begin{bmatrix}
	\lambda_l&1&\cdots&0&0\\
	0&\lambda_l&\cdots&0&0\\
	\vdots&\vdots&\ddots&\vdots&\vdots\\
	0&0&\cdots&\lambda_l&1\\
	0&0&\cdots&0&\lambda_l\\
	\end{bmatrix}\! \in\mathbb{R}^{n_l\times n_l},
	\end{equation*}
	$ p $ is the number of Jordan blocks,
	$ l\in\{1,2,\dots,p\} $ and $ \sum_{l=1}^{p}n_{l}=n $.
	Then,
	\begin{equation*}
	e^{At}=\sum_{j=0}^{\infty}\frac{A^j}{j!}t^j=P\sum_{j=0}^{\infty}\frac{\Lambda^j}{j!}t^j P^{-1}=Pe^{\Lambda t}P^{-1},
	\end{equation*}
	implying that
	\begin{equation}\label{singular}
	\begin{aligned}
	&\det\left(\int_{0}^{T} g(\tau) e^{A\tau}d\tau\right)\\
	=&\det(P)\det\left(\int_{0}^{T} g(\tau) e^{\Lambda\tau}d\tau\right) \det(P^{-1})\\
	=&\prod_{{l}=1}^{q}\det\left(\int_{0}^{T} g(\tau) e^{\Lambda_{l}\tau}d\tau\right).
	\end{aligned}
	\end{equation}
	Note that for any $ {l}\in\{1,2,\dots,p\} $,
	\begin{equation*}
	g(\tau)e^{\Lambda_{l}\tau}=\begin{bmatrix}
	g(\tau)e^{\lambda_{l}\tau}&*&\cdots&*\\
	0&g(\tau)e^{\lambda_{l}\tau}&\cdots&*\\
	\vdots&\vdots&\ddots&\vdots\\
	0&0&\cdots&g(\tau)e^{\lambda_{l}\tau}\\
	\end{bmatrix},
	\end{equation*}
	where $\ast$ denotes an entry that is left unspecified.
	By Assumption~\ref{A}, there exist $ t_1,t_2\in(0,T) $ such that $t_1 < t_2$ and $ g(t)>0 $ for any $ t\in[t_1,t_2] $.
	Then, since $ g(\tau)e^{\lambda_{l}\tau}\geq0 $ for any $ \tau\in[0,T) $, we have $ \int_{0}^{T} g(\tau)e^{\lambda_l\tau}d\tau\geq \int_{t_1}^{t_2} g(\tau)e^{\lambda_l\tau}d\tau>0 $, implying $ \det(\int_{0}^{T} g(\tau)e^{\Lambda_l\tau}d\tau)\neq0 $.
	Therefore, by \eqref{singular}, we have that $ \bar{A} $ is nonsingular.
	By Assumption~\ref{A}, $ \bar{G} $ is nonsingular, implying that $ \bar{C} $ is nonsingular, i.e., \eqref{lc} holds.
\end{proof}

\begin{lemma}\label{Lrankpq}
	(\cite[Fact 2.10.14]{bernstein2009matrix})
	For any $ P\in \mathbb{R}^{n_1\times n_2} $ and $ Q\in\mathbb{R}^{n_2\times n_3} $, 
	\begin{equation}\label{crank}
	\rank(PQ)= \rank(Q)-\dim(\ker(P)\cap\im(Q)).
	\end{equation}
\end{lemma}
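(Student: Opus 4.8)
The plan is to reduce the identity to the rank--nullity theorem by analyzing how $P$ acts on the image of $Q$. The starting observation is that $\im(PQ) = P(\im(Q))$, so that $\rank(PQ) = \dim(P(\im(Q)))$; it therefore suffices to compute the dimension of the image of the subspace $\im(Q) \subseteq \mathbb{R}^q$ under $P$.

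First I would restrict $P$ to $\im(Q)$, obtaining the linear map $P|_{\im(Q)}: \im(Q) \to \mathbb{R}^p$. Applying the rank--nullity theorem to this map gives
\[
\dim(\im(Q)) = \dim\big(\ker(P|_{\im(Q)})\big) + \dim\big(\im(P|_{\im(Q)})\big).
\]
Next I would identify the two right-hand terms. The kernel of $P|_{\im(Q)}$ is precisely the set of vectors lying in $\im(Q)$ that are annihilated by $P$, i.e., $\ker(P|_{\im(Q)}) = \ker(P) \cap \im(Q)$. The image of $P|_{\im(Q)}$ equals $P(\im(Q)) = \im(PQ)$, whose dimension is $\rank(PQ)$. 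Substituting these, and using $\dim(\im(Q)) = \rank(Q)$, yields
\[
\rank(Q) = \dim\big(\ker(P) \cap \im(Q)\big) + \rank(PQ),
\]
which rearranges to \eqref{crank}.

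There is no substantial obstacle in this argument; the only points requiring care are the identity $\im(PQ) = P(\im(Q))$ and the correct description of the kernel of the restricted map. Since \eqref{crank} is quoted verbatim from \cite[Fact 2.10.14]{bernstein2009matrix}, one may simply invoke it, but the rank--nullity computation above furnishes a short self-contained proof.
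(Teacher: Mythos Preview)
Your argument is correct: restricting $P$ to $\im(Q)$ and applying rank--nullity yields exactly \eqref{crank}, and the identifications $\ker(P|_{\im(Q)}) = \ker(P)\cap\im(Q)$ and $\im(P|_{\im(Q)}) = \im(PQ)$ are both valid. The paper itself offers no proof, simply citing \cite[Fact~2.10.14]{bernstein2009matrix}, so your short self-contained derivation goes beyond what is provided there.
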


Now we investigate under which conditions \eqref{rank} holds. 
By Lemma~\ref{LC}, $ \bar{C} $ is nonsingular.
Therefore, \eqref{relation} implies
\begin{equation*}
\rank\left(\begin{bmatrix}
x^{\rm f}_{[1,M]}\\
u^{\rm f}_{[1,M]}
\end{bmatrix}\right)=\rank(\bar{D}\bar{F}),
\end{equation*}
that is, \eqref{rank} holds if and only if $ \rank(\bar{D}\bar{F})=n+m $.
By Lemma~\ref{Lrankpq}, we have
\begin{equation*}
\rank(\bar{D}\bar{F})=\rank(\bar{F})-\dim(\ker(\bar{D})\cap\im(\bar{F})).
\end{equation*}
To arrive at the condition $ \rank(\bar{D}\bar{F})=n+m $, we therefore require that
$$
\rank(\bar{F}) - \dim(\ker(\bar{D})\cap\im(\bar{F})) = n+m.
$$
The latter condition is, in general, difficult to impose by choosing the inputs $\mu_0,\mu_1,....,\mu_{N-1}$. 
The reason is that the states of the system cannot be chosen freely, but depend on the choice of inputs and the (unknown) dynamics \eqref{cs}. 
Therefore, it is in general not possible to design an experiment so that the kernel of $ \bar{D} $ coincides with a given subspace. 
We do note, however, that the problem simplifies in the case that $ N = n+m $. 
In this case, if $ \rank (\bar{D}) = \rank (\bar{F}) = n+m $, then \mbox{$ \ker (\bar{D}) = \{0\} $} and thus $ \rank(\bar{D}\bar{F}) = n+m $. 
In what follows, we summarize our progress in a proposition, which provides a solution to Problem \ref{Prank}.

\begin{proposition}\label{Pranknm}
	Let Assumption~\ref{A} hold.
	Suppose that $N = n+m$ and $ \rank(\bar{F})=n+m $.
	Then, \eqref{rank} holds if and only if $ \rank(\bar{D})=n+m $.
\end{proposition}
\begin{proof}
	By \eqref{relation}, the necessity holds since 
	$$ \rank\left( \begin{bmatrix} x_{[1,M]}^{\rm f} \\ u_{[1,M]}^{\rm f} \end{bmatrix}\right) \leq\rank(\bar{D})\leq n+m. $$
	For sufficiency, $N = n+m$ implies $ \ker(\bar{D})=\{0\} $.
	Then, by Lemma \ref{Lrankpq}, $ \rank(\bar{D}\bar{F})=\rank(\bar{F})=n+m $.
	Since $ \bar{C}\in \mathbb{R}^{(n+m)\times(n+m)} $, Lemma \ref{LC} implies that \eqref{rank} holds.
\end{proof}

\begin{example}\label{Erank}
	(Rank of $ \bar{F} $ for various filter functions).
	Let $ N=M=n+m $.
	For the function $ f_\ell $ in \eqref{designtest},	$ \bar{F}=I. $
	For the function $ f_\ell $ in \eqref{designlb},
	\begin{equation}\label{Flbf}
	\bar{F}=\begin{bmatrix}
	1&0&\cdots&0\\
	e^{-\rho T}&1&\cdots&0\\
	\vdots&\vdots&\ddots&\vdots \\
	e^{-\rho(N-1)T}&e^{-\rho(N-2)T}&\cdots&1
	\end{bmatrix}.
	\end{equation}
	Moreover, for the function $ f_\ell $ in \eqref{designexp},
	\begin{equation}\label{Flpf}
	\bar{F}=\begin{bmatrix}
	e^{-\rho T}&e^{-2\rho T}&\cdots&e^{-\rho NT}\\
	0&e^{-\rho T}&\cdots&e^{-\rho(N-1)T}\\
	\vdots&\vdots&\ddots&\vdots \\
	0&0&\cdots&e^{-\rho T}
	\end{bmatrix}.
	\end{equation}
	Note that $ \rank(\bar{F})=n+m $ in all three cases.
\end{example}
As shown in Example~\ref{Erank}, all principal submatrices of $ \bar{F} $ derived from \eqref{designtest}-\eqref{designexp} are full rank.
This observation, together with \eqref{relation}, leads to a rank relation between the filtered data and the sampled data.
\begin{proposition}\label{Prelation}
	Let Assumption~\ref{A} hold. 
	Suppose that the functions $g_\ell$, for $\ell = 1,2,\dots,M$, satisfy $ f_\ell((\ell-1)T)\neq 0 $ and $ f_\ell(kT)=0 $ for all $k \geq \ell$ and $ \ell\in\{1,2,\dots,\min\{N,M\}\} $.
	Then, 
	\begin{equation}\label{rank=}
	\rank\left(\begin{bmatrix}
	\chi_{[0,k-1]}\\\mu_{[0,k-1]}
	\end{bmatrix}\right)=
	\rank\left(\begin{bmatrix}
	x^{\rm f}_{[1,k]}\\
	u^{\rm f}_{[1,k]}
	\end{bmatrix}\right),
	\end{equation}
	for all $ k\in\{1,2,\dots,\min\{N,M\}\} $.
\end{proposition}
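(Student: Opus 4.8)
The plan is to read off the result directly from the factorization \eqref{relation} by restricting it to the leading $k$ columns and then exploiting the triangular structure that the two index conditions impose on $\bar{F}$. First I would fix $k\in\{1,2,\dots,\min\{N,M\}\}$ and let $\bar{F}_k$ denote the matrix formed by the first $k$ columns of $\bar{F}$. Since selecting columns on the right commutes with the left factor $\bar{C}\bar{D}$, \eqref{relation} restricts to
$$\begin{bmatrix} x^{\rm f}_{[1,k]}\\ u^{\rm f}_{[1,k]}\end{bmatrix}=\bar{C}\bar{D}\bar{F}_k.$$

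The key step is to analyze $\bar{F}_k$. The $(j+1,\ell)$ entry of $\bar{F}$ is $f_\ell(jT)$, so for each column index $\ell\le k$ the hypothesis $f_\ell(jT)=0$ for all $j\ge\ell$ makes every entry in rows $\ell+1,\dots,N$ vanish; in particular all rows beyond the $k$-th are zero. Hence $\bar{F}_k=\begin{bmatrix}\bar{F}_{k,k}\\0\end{bmatrix}$, where $\bar{F}_{k,k}\in\mathbb{R}^{k\times k}$ is the leading principal submatrix. Together, the two hypotheses show that $\bar{F}_{k,k}$ is upper triangular with diagonal entries $f_\ell((\ell-1)T)\ne 0$, so $\bar{F}_{k,k}$ is nonsingular. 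The zero lower block further implies that $\bar{D}\bar{F}_k$ only involves the first $k$ columns of $\bar{D}$, i.e., $\bar{D}\bar{F}_k=\bar{D}_k\bar{F}_{k,k}$ with $\bar{D}_k=\begin{bmatrix}\chi_{[0,k-1]}\\\mu_{[0,k-1]}\end{bmatrix}$. Substituting this back gives
$$\begin{bmatrix} x^{\rm f}_{[1,k]}\\ u^{\rm f}_{[1,k]}\end{bmatrix}=\bar{C}\,\bar{D}_k\,\bar{F}_{k,k}.$$

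It then remains to take ranks. By Lemma~\ref{LC} the matrix $\bar{C}$ is nonsingular, and $\bar{F}_{k,k}$ is nonsingular by the previous paragraph; since multiplying by nonsingular matrices on either side preserves rank, $\rank(\bar{C}\bar{D}_k\bar{F}_{k,k})=\rank(\bar{D}_k)$, which is exactly the claimed identity \eqref{rank=}. I expect the main obstacle to be the bookkeeping in the middle step: correctly translating the index conditions $f_\ell((\ell-1)T)\ne 0$ and $f_\ell(jT)=0$ for $j\ge\ell$ into the upper-triangular-plus-zero-block shape of $\bar{F}_k$, and confirming that its leading block is invertible. Once that structure is secured, the rank equality is immediate from the invertibility of $\bar{C}$ and $\bar{F}_{k,k}$, and no use of Lemma~\ref{Lrankpq} is needed because the truncation forces $\bar{F}_{k,k}$ to be square and full rank.
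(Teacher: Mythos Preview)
Your proposal is correct and follows essentially the same argument as the paper: both truncate \eqref{relation} to the first $k$ columns, use the hypotheses to obtain an upper-triangular $k\times k$ block $\bar{F}_{k,k}$ with nonzero diagonal (the paper calls this block $\bar{F}_k$ and writes the truncated factorization directly), and then conclude via the invertibility of $\bar{C}$ from Lemma~\ref{LC} and of $\bar{F}_{k,k}$. Your write-up is slightly more explicit in separating out the zero lower block of the $N\times k$ column selection before collapsing to $\bar{D}_k\bar{F}_{k,k}$, but the substance is identical.
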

\begin{proof}
	Since $ f_\ell(kT)=0 $ for all $ k\geq\ell $, by considering the first $ k $ columns of \eqref{relation}, we have
	\begin{equation}\label{samerank}
	\begin{bmatrix}
	x^{\rm f}_{[1,k]}\\
	u^{\rm f}_{[1,k]}
	\end{bmatrix}
	=\bar{C}
	\begin{bmatrix}
	\chi_{[0,k-1]}\\\mu_{[0,k-1]}
	\end{bmatrix}\bar{F}_k,
	\end{equation}
	where
	\begin{equation*}
	\bar{F}_k=\begin{bmatrix}
	f_1(0)&f_2(0)&\cdots&f_k(0)\\
	0&f_2(T)&\cdots&f_k(T)\\
	\vdots&\vdots&\ddots&\vdots\\
	0&0&\cdots&f_k((k-1)T)\\
	\end{bmatrix}.
	\end{equation*}
	Since $ f_\ell((\ell-1)T)\neq 0 $ for all $ \ell\in\{1,2,\dots,k\} $, $ \bar{F}_k $ is nonsingular.
	By Lemma~\ref{LC}, $ \bar{C} $ is also nonsingular.
	Hence, \eqref{relation} implies that \eqref{rank=} holds for any \mbox{$ k\in\{1,2,\dots,\min\{N,M\}\} $}.
\end{proof}

	\begin{remark}
		Note that the condition $ f_{\ell}(kT) = 0 $ for all $k \geq \ell$ in Proposition \ref{Prelation} implies that the signal $ x $ (or $ u $) over the interval $ [\ell T,\mathcal{T}) $ does not affect the filtered data $ x^{\rm f}_\ell $ (or $ u^{\rm f}_\ell $).
		By examining the matrix $ \bar{F} $ in Example \ref{Erank} for the filter functions \eqref{designtest}-\eqref{designexp}, it can be observed that the result of Proposition~\ref{Prelation} applies to \eqref{designtest} and \eqref{designexp}, but not to \eqref{designlb}. 
	\end{remark}

In the next subsection, we will focus on an experiment design method that imposes $ \rank(\bar{D})=n+m $ by appropriate choice of the inputs. 
This, in combination with Proposition~\ref{Prelation}, will lead to the desired rank condition on the \emph{filtered} data as in \eqref{rank}.

\section{Online experiment design}
\label{Sdesign}

The basic idea to achieve $ \rank(\bar{D})=n+m $ involves designing the input so that the rank of the data matrix increases progressively, that is, for all \mbox{$ k\in\{1,2,\dots,n+m-1\} $},
\begin{equation}\label{rank+}
\rank\left(\begin{bmatrix}
\chi_{[0,k-1]}&\chi_k\\
\mu_{[0,k-1]}&\mu_k
\end{bmatrix}\right)=
\rank\left(\begin{bmatrix}
\chi_{[0,k-1]}\\
\mu_{[0,k-1]}
\end{bmatrix}\right)+1.
\end{equation}
Condition \eqref{rank+} can be satisfied if the input signal is designed during system operation via the following online procedure inspired by \cite[Theorem 1]{van2021beyond}.
\begin{proposition}\label{Tis}
	Let Assumption~\ref{Apathological} hold.
	For the continuous-time system \eqref{cs}, design the piecewise constant input signal $ u $ in \eqref{mu} as follows. 
	At time $ t=0 $, select a nonzero $ \mu_0 $. 
	At time $ t=kT $, for $ k \in \{1,2,\dots,N-1\} $,
	\begin{itemize}
		\item if
		\begin{equation}\label{isnotin}
		\chi_k \notin \im \chi_{[0,k-1]},
		\end{equation}
		then select $ \mu_k $ arbitrarily;
		\item if 
		\begin{equation}\label{isin}
		\chi_k \in \im \chi_{[0,k-1]},
		\end{equation} 
		then there exist $ \eta \in \mathbb{R}^m $ and $ \xi\in \mathbb{R}^n $ with $ \eta\neq0 $ such that
		\begin{equation}\label{v}
		\begin{bmatrix}
		\xi^\top&\eta^\top
		\end{bmatrix}
		\begin{bmatrix}
		\chi_{[0,k-1]}\\\mu_{[0,k-1]}
		\end{bmatrix}=0.
		\end{equation}
		In this case, select $ \mu_k $ such that
		\begin{equation}\label{muk}
		\xi^\top \chi_k+\eta^\top \mu_k\neq0.
		\end{equation}
	\end{itemize}
	Then, the resulting trajectory $ (u,x)\in\mathfrak{B}_{(n+m)T} $ satisfies $ \rank(\bar{D})=n+m $.
\end{proposition}
Proposition~\ref{Tis} follows from \cite[Theorem 1]{van2021beyond} applied to the discrete-time system \eqref{ds}. 
Its rationale is as follows.
For a piecewise constant input signal $ u $, the sampled data obey the dynamics in \eqref{ds}.
Then, by Assumption~\ref{Apathological} and Lemma~\ref{LCO}, at time instant $ kT $, $ k\in\{0,1,\dots,N-1\} $, a $\mu_k$ can always be designed such that \eqref{rank+} holds.
Note that one may select a solution which satisfies a norm constraint, e.g., to account for actuation constraints.
Specifically, let $\epsilon > 0$ be an upper bound for the norm of the input. Then, it can be shown that for $k \in \{0,1,\dots,N-1\}$, there exists a $\mu_k$ satisfying \eqref{muk} and $|| \mu_k || \leq \epsilon$.

We are now in a position to solve Problem~\ref{Pdesign}.
\begin{theorem}\label{Trank}
	Let Assumption~\ref{Apathological} and \ref{A} hold, and suppose that $ \rank(\bar{F})=n+m $. Let the input signal $ u $ be designed according to Proposition~\ref{Tis}.
	Then, the resulting trajectory $(u,x) \in \mathfrak{B}_\mathcal{T}$ is such that the filtered data $ (u^{\rm f}_{[1,M]},x^{\rm f}_{[1,M]}) $ satisfy \eqref{rank}.
\end{theorem}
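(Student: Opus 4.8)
The plan is to obtain the theorem as a direct composition of Proposition~\ref{Tis} and Proposition~\ref{Pranknm}, with the online design guaranteeing the sampled-data rank condition and the factorization \eqref{relation} transferring it to the filtered data. Since the input is designed according to Proposition~\ref{Tis} and Assumption~\ref{Apathological} holds, the first step is to invoke that proposition to conclude that the resulting trajectory $(u,x)\in\mathfrak{B}_{(n+m)T}$ satisfies the sampled-data rank condition \eqref{rankis}. In particular this fixes the horizon length to $N=n+m$, so that $\mathcal{T}=NT=(n+m)T$.

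With $N=n+m$ now in hand, the second step is to check that all hypotheses of Proposition~\ref{Pranknm} are met: Assumption~\ref{A} holds by assumption, $\rank(\bar{F})=n+m$ holds by assumption, and \eqref{rankis} was just established. Applying Proposition~\ref{Pranknm} then yields \eqref{rank} for the filtered data $(x^{\rm f}_{[1,M]},u^{\rm f}_{[1,M]})$, which is exactly the claim.

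The only points requiring care are bookkeeping rather than substance. First, one should confirm that the two propositions are compatible on the value of $N$: Proposition~\ref{Tis} produces precisely an experiment of length $N=n+m$, which is the single case that Proposition~\ref{Pranknm} handles. Second, $\rank(\bar{F})=n+m$ tacitly requires $M\geq n+m$, since $\bar{F}\in\mathbb{R}^{N\times M}$ with $N=n+m$; this is consistent with the necessity of $M\geq n+m$ for \eqref{rank}. The substantive content---namely that the sampled-data rank survives filtering---is already carried by Proposition~\ref{Pranknm} through the factorization \eqref{relation}, where $\bar{C}$ is nonsingular by Lemma~\ref{LC} and $\bar{D}$ becomes square and nonsingular once \eqref{rankis} holds with $N=n+m$, so that $\rank(\bar{C}\bar{D}\bar{F})=\rank(\bar{F})=n+m$. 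Consequently there is no genuine obstacle beyond ensuring these dimensional alignments; the proof is a short chaining of the two previously established results.
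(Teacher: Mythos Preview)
Your proposal is correct and matches the paper's own treatment: the paper presents this theorem as an immediate consequence of combining Proposition~\ref{Tis} (which yields \eqref{rankis} with $N=n+m$) with Proposition~\ref{Pranknm} (which then gives \eqref{rank}), without any additional argument. Your bookkeeping remarks about the dimensional compatibility of $N$, $M$ and $\bar{F}$ are accurate and even slightly more explicit than what the paper spells out.
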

\begin{proof} 
	Since $ u $ is designed according to Proposition~\ref{Tis}, the resulting trajectory $ (u,x)\in\mathfrak{B}_{(n+m)T} $ satisfies $ \rank(\bar{D})=n+m $.
	Then, we conclude from Proposition~\ref{Pranknm} that \eqref{rank} is satisfied since $ N=n+m $ and $ \rank(\bar{F})=n+m $.
\end{proof}

Let us now go one step further. 
The following result shows that the online input design ensures not only $ \rank(\bar{D})=n+m $, but that the stronger condition in \eqref{rankisc} holds.
\begin{theorem}\label{Tpe}
	Let Assumption~\ref{Apathological} hold.
	Consider the trajectory $ (u,x)\in\mathfrak{B}_{NT} $ where $u$ is a piecewise constant input signal designed as in Proposition~\ref{Tis}.
	Then, \eqref{rankisc} holds.
\end{theorem}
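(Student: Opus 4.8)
The plan is to deduce the uniform-in-$t$ rank condition \eqref{rankisc} from the single rank condition \eqref{rankis} at $t=0$, which Proposition~\ref{Tis} already guarantees under Assumption~\ref{Apathological}. The bridge between the two is an explicit $t$-dependent but \emph{input-independent} transformation that relates the shifted sampled data $\chi_{[0,N-1]}(t)$ to the data at $t=0$ and is invertible for every $t$, so that rank is preserved.

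First I would fix $t\in[0,T)$ and apply the variation-of-constants formula on each interval $[kT,kT+t)$. Since the input is piecewise constant with $u(kT+s)=\mu_k$ for $s\in[0,T)$, and since $t<T$ guarantees the integration never crosses into the next sampling interval, the convolution term collapses and
\[
\chi_k(t)=x(kT+t)=e^{At}x(kT)+\left(\int_0^t e^{A(t-s)}\,ds\right)B\mu_k = A_t\chi_k+B_t\mu_k,
\]
with $A_t:=e^{At}$ and $B_t:=\int_0^t e^{As}B\,ds$. Collecting columns over $k=0,\dots,N-1$ gives $\chi_{[0,N-1]}(t)=A_t\chi_{[0,N-1]}+B_t\mu_{[0,N-1]}$, and because $\mu_{[0,N-1]}$ does not depend on $t$ this assembles into
\[
\begin{bmatrix}\chi_{[0,N-1]}(t)\\\mu_{[0,N-1]}\end{bmatrix}=\Phi(t)\begin{bmatrix}\chi_{[0,N-1]}\\\mu_{[0,N-1]}\end{bmatrix},\quad \Phi(t)=\begin{bmatrix}A_t&B_t\\0&I\end{bmatrix}.
\]

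The key observation is that $\Phi(t)$ is block upper triangular with diagonal blocks $A_t=e^{At}$ and $I$, so $\det\Phi(t)=\det(e^{At})=e^{t\,\tr(A)}\neq0$; hence $\Phi(t)$ is nonsingular for every $t$. Since left multiplication by a nonsingular matrix preserves rank, the rank of the left-hand side equals that of $\big[\chi_{[0,N-1]}^\top\ \mu_{[0,N-1]}^\top\big]^\top$, which equals $n+m$ by \eqref{rankis}. As $t\in[0,T)$ was arbitrary, \eqref{rankisc} follows.

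I expect the only step requiring genuine care to be the derivation of $\chi_k(t)=A_t\chi_k+B_t\mu_k$: one must verify that the input on $[kT,kT+t)$ is exactly $\mu_k$ so that the forced response reduces to $B_t\mu_k$, which is precisely where $t<T$ is used. Once this relation is in place, the block-triangular structure makes the invertibility of $\Phi(t)$ immediate and the remainder is a routine rank-preservation argument. Notably, no further appeal to Assumption~\ref{Apathological} is needed beyond what Proposition~\ref{Tis} already invokes, since the invertibility of $\Phi(t)$ holds unconditionally.
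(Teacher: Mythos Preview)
Your proposal is correct and follows essentially the same route as the paper: both derive the identity $\chi_k(t)=e^{At}\chi_k+\bigl(\int_0^t e^{A(t-\tau)}B\,d\tau\bigr)\mu_k$ from the variation-of-constants formula, assemble it into the block upper-triangular transformation $\Phi(t)=\begin{bmatrix}e^{At}&B_t\\0&I\end{bmatrix}$, and use the invertibility of $e^{At}$ together with \eqref{rankis} from Proposition~\ref{Tis}. The only cosmetic difference is that the paper phrases the final step as a left-kernel argument (take $[\xi^\top\ \eta^\top]$ annihilating the shifted data matrix and show $\xi=0$, then $\eta=0$), whereas you invoke rank preservation under left multiplication by the nonsingular $\Phi(t)$ directly; these are equivalent.
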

\begin{proof}
	Let $ t\in[0,T) $, and $ \eta\in\mathbb{R}^m $ and $ \xi\in\mathbb{R}^n $ be vectors such that
	\begin{equation}
	\begin{bmatrix}
	\xi^\top&\eta^\top
	\end{bmatrix}
	\begin{bmatrix}
	\chi_{[0,N-1]}(t)\\
	\mu_{[0,N-1]}
	\end{bmatrix}=0.
	\end{equation}
	By \eqref{cs}, for any $ k\in\{1,2,\dots,N-1\} $ we get
	\begin{equation}\label{}
	\begin{aligned}
	\chi_k(t)&=e^{At}\chi_k+\int_{kT}^{t+kT}e^{A(t+kT-\tau)}Bu(\tau)d\tau\\
	&=e^{At}\chi_k+\int_{0}^{t}e^{A(t-\tau)}Bd\tau \mu_k.
	\end{aligned}
	\end{equation}
	Then, we have
	\begin{equation}
	\chi_{[0,N-1]}(t)=
	\begin{bmatrix}
	e^{At}&\int_{0}^{t}e^{A(t-\tau)}Bd\tau 
	\end{bmatrix}
	\begin{bmatrix}
	\chi_{[0,N-1]}\\
	\mu_{[0,N-1]}
	\end{bmatrix},
	\end{equation}
	implying that
	\begin{equation}
	\begin{bmatrix}
	\xi^\top&\eta^\top
	\end{bmatrix}
	\begin{bmatrix}
	e^{At}&\int_{0}^{t}e^{A(t-\tau)}Bd\tau \\
	0&I
	\end{bmatrix}
	\begin{bmatrix}
	\chi_{[0,N-1]}\\
	\mu_{[0,N-1]}
	\end{bmatrix}=0.
	\end{equation}
	According to Proposition~\ref{Tis}, we have $ \rank(\bar{D})=n+m $, implying that
	\begin{equation}
	\xi^\top e^{At}=0\ \text{ and }\ \xi^\top\int_{0}^{t}e^{A(t-\tau)}Bd\tau +\eta^\top=0.
	\end{equation}
	Since $ e^{At} $ is nonsingular, we have $ \xi=0 $, implying $ \eta=0 $.
	Hence, we conclude that \eqref{rankisc} holds.
\end{proof}

Theorem \ref{Tpe} shows that the designed input signal is such that the data capture the system's dynamics at all times between sampling instants, which establishes a connection between the proposed method and the continuous-time Willems et al.'s fundamental lemma in \cite{lopez2022continuous}.
Next, we compare Theorem~\ref{Tpe} to Lemma~\ref{Lrank}, i.e., \cite[Lemma 1]{lopez2022continuous}. 
Lemma~\ref{Lrank} requires at least $ n+m+nm $ samples to guarantee \eqref{rankisc}. 
However, Theorem~\ref{Tpe} guarantees \eqref{rankisc} with the minimum number of samples possible, namely, $n+m$.

\section{Numerical example}

In this section, we consider a numerical example to demonstrate that, using the experiment design method in Section~\ref{Sdesign}, both the sampled data and the filtered data generated by different filters are informative for system identification.
The example is concluded by showing that the system matrices can be reconstructed from the filtered data.

The system to be identified is described by the aircraft longitudinal dynamics in \cite{moustakis2018adaptive} with system matrices
\vspace{-1ex}
\begin{align*}
A=&\begin{bmatrix}
-0.0190 & 0.0825 & -0.1005 & -0.3206 \\
-0.2154 & -2.7859 & 1.2031 & -0.0271 \\
3.2527 & -30.7871 & -3.5418 & 0 \\
0 & 0 & 1 & 0
\end{bmatrix},\\
B=&\begin{bmatrix}
0.0065 & 0.0534 \\
-0.6103 & 0.0020 \\
-74.6355 & 0.5431 \\
0 & 0
\end{bmatrix}.
\end{align*}
The four states of the system are the velocity along the $ x $- and $ z $-body-axis, the angular velocity around the \mbox{$ y $-body-axis}, and the pitch angle, while the two \mbox{inputs} are the elevator deflection and throttle.
Note that \mbox{$n+m=6$}.
Let $ T=0.1 $.
Because the eigenvalues of $A$ are \mbox{$\{  -3.1548 \pm 6.0892i, -0.0185 \pm 0.3263i\}$}, Assumption~\ref{Apathological} holds. 
In general, even though $A$ is unknown, Assumption~\ref{Apathological} fails to hold only on a set of measure zero.
Let $ x(0)=\left[ 2\ -\!1\ 1\ 0.5 \right]^\top $.
Using the design in Proposition~\ref{Tis}, the piecewise constant input is selected as
\begin{equation}
	\mu_{[0,5]}=\begin{bmatrix}
		0.5 & -0.2 & 0 & 0 & 0.2 & -0.5 \\
		-1 & 0 & 0.5 & -0.5 & 0 & 1
	\end{bmatrix}\!,
\end{equation}
which generates the following state samples:
\begin{align}\nonumber
&	\chi_{[0,6]}=\\
\label{oxd}
& \!\!\!
\begin{bmatrix}
	2 & 1.9600 & 1.9099 & 1.8420 & 1.7680 & 1.7089 & 1.6472 \\
	-1 & -0.7525 & -0.3722 & 0.0484 & 0.2962 & 0.2863 & 0.3779 \\
	1 & 0.2743 & 3.4680 & 3.3281 & 2.2990 & 0.0192 & 2.8617 \\
	0.5 & 0.5680 & 0.7746 & 1.1250 & 1.4098 & 1.5188 & 1.6739
\end{bmatrix}\!.
\end{align}
We consider two specific examples of filter functions: the first Laguerre basis function in \eqref{lbf} with $\rho=1$ and $T = 0.1$, and the low-pass filter function in \eqref{lpf} with $\rho=1$ and $T = 0.1$.
With the first Laguerre basis function, by \eqref{fx} and \eqref{fu}, we obtain the following filtered data matrices
\begin{align}
\nonumber
&x^{\rm f}_{[1,6]}=\\
\label{fxd1}
&
\begin{bmatrix}
	1.1832 & 1.0132 & 0.8315 & 0.6398 & 0.4385 & 0.2263 \\
	-0.1333 & -0.0187 & 0.0669 & 0.0969 & 0.0794 & 0.0410 \\
	1.2364 & 1.2644 & 1.0940 & 0.6878 & 0.3352 & 0.2055 \\
	0.6444 & 0.6319 & 0.6027 & 0.5251 & 0.3909 & 0.2115
\end{bmatrix}\!,
\end{align}
\begin{align}
\nonumber
&u^{\rm f}_{[1,6]}=\\
&\!\!\!
\begin{bmatrix}
	0.0202 & -0.0521 & -0.0278 & -0.0307 & -0.0340 & -0.0673 \\
	-0.0477 & 0.0960 & 0.1061 & 0.0429 & 0.1218 & 0.1346
\end{bmatrix}\!. \quad
\end{align}
From \eqref{xdf}, we can obtain that for any $ \ell \in \{1,2,\dots,6\} $, $x^{\rm df}_\ell = \sqrt{2}e^{0.1\ell-0.7}\chi_6 - \sqrt{2}\chi_{\ell-1} + x^{\rm f}_\ell$.
Then, by \eqref{oxd} and \eqref{fxd1}, we get the filtered state derivatives
\begin{align}
\nonumber
&x^{\rm df}_{[1,6]}=\\
&\!\!\!
\begin{bmatrix}
	-0.3667 & -0.3457 & -0.3080 & -0.2395 & -0.1545 & -0.0826 \\
	1.5742 & 1.3697 & 0.9515 & 0.4244 & 0.0981 & 0.1198 \\
	2.0432 & 3.3310 & -1.0977 & -1.0208 & 0.3973 & 3.8403 \\
	1.2364 & 1.2644 & 1.0940 & 0.6878 & 0.3352 & 0.2055
\end{bmatrix}\!.
\end{align}
With the low-pass filter function, by \eqref{fx} and \eqref{fu}, we obtain the following data matrices
\begin{align}\nonumber
&x^{\rm f}_{[1,6]}=\\
\label{fxd2}
&\!\!\!
\begin{bmatrix}
	0.1883 & 0.3548 & 0.4995 & 0.6237 & 0.7296 & 0.8200 \\
	-0.0819 & -0.1296 & -0.1312 & -0.1006 & -0.0612 & -0.0262 \\
	0.0641 & 0.2572 & 0.5660 & 0.7824 & 0.8098 & 0.8826 \\
	0.0515 & 0.1081 & 0.1886 & 0.2923 & 0.4057 & 0.5169
\end{bmatrix}\!,\\
\nonumber
&u^{\rm f}_{[1,6]}=\\
\label{fud2}
&\!\!\!
\begin{bmatrix}
	0.0476 & 0.0240 & 0.0217 & 0.0197 & 0.0368 & -0.0143 \\
	-0.0952 & -0.0861 & -0.0303 & -0.0750 & -0.0679 & 0.0337
\end{bmatrix}\!,
\end{align}
From \eqref{xdf}, we can obtain that for any $ \ell \in \{1,2,\dots,6\} $, $x^{\rm df}_\ell = \chi_\ell-e^{-0.1\ell}\chi_0-x^{\rm f}_\ell$.
Then, by \eqref{oxd} and \eqref{fxd2}, we get the filtered state derivatives
 \begin{align}\nonumber
&x^{\rm df}_{[1,6]}=\\
\label{fdxd2}
&\!\!\!
\begin{bmatrix}
	-0.0381 & -0.0824 & -0.1391 & -0.1964 & -0.2338 & -0.2704 \\
	0.2343 & 0.5761 & 0.9205 & 1.0671 & 0.9540 & 0.9529 \\
	-0.6946 & 2.3922 & 2.0213 & 0.8463 & -1.3972 & 1.4303 \\
	0.0641 & 0.2572 & 0.5660 & 0.7824 & 0.8098 & 0.8826
\end{bmatrix}\!.
\end{align}
Note that filtered data matrices \eqref{fxd1}-\eqref{fdxd2} are constructed solely for identification purposes and differ in general from the system trajectories obtained from the system.
For both filter functions, we verify that
\begin{equation}\label{r}
\rank\left(\begin{bmatrix}
x^{\rm f}_{[1,6]}\\
u^{\rm f}_{[1,6]}
\end{bmatrix}\right)=
\rank\left(\begin{bmatrix}
\chi_{[0,5]}\\
\mu_{[0,5]}
\end{bmatrix}\right)=6.
\end{equation}
This is in line with the conclusion of Proposition~\ref{Pranknm}.
In other words, the designed input guarantees the filtered data to be informative with the minimum possible number of samples.
Being the filtered data informative for system identification, we can reconstruct the system matrices as in \eqref{identify}:
\begin{equation*}
\begin{bmatrix}
\hat{A}&\hat{B}
\end{bmatrix}=x^{\rm df}_{[1,6]}
\begin{bmatrix}
x^{\rm f}_{[1,6]}\\
u^{\rm f}_{[1,6]}
\end{bmatrix}^{-1}.
\end{equation*}
The Frobenius norm of the identification error is 
\mbox{$
\lVert[A\ B]-[\hat{A}\ \hat{B}]\rVert_F=1.8592\times 10^{-6}
$}
for the first Laguerre basis function, and \mbox{$ 1.5675\times 10^{-6} $} for the low-pass filter function. 
We mention that similar conclusions can be drawn for the filter functions in \eqref{tf1} and \eqref{tf2}, which are not shown for the sake of brevity.

\section{Conclusion}

This paper has investigated the problem of \mbox{experiment} design for continuous-time systems.
To avoid reliance on time derivatives of measured trajectories, which is a key challenge in data-driven methods for continuous-time systems, we have proposed a generalized filtering framework to collect filtered data.
We have presented conditions to ensure that these filtered data are informative for system identification.
We then have developed an online experiment design method that guarantees informativity with the minimum number of samples.
Several examples of filter functions have demonstrated the generality of the proposed filtering framework.
Potential future work includes applying the proposed experiment design method in specific data‑driven control settings such as data‑driven stabilization or model reference control, and extending the framework to noisy data settings.

\bibliographystyle{abbrv}       
\bibliography{References}          

@article{van2020data,
  title={Data informativity: a new perspective on data-driven analysis and control},
  author={van Waarde, Henk J and Eising, Jaap and Trentelman, Harry L and Camlibel, M Kanat},
  journal={IEEE Transactions on Automatic Control},
  volume={65},
  number={11},
  pages={4753--4768},
  year={2020},
  publisher={IEEE}
}

@article{willems2005note,
  title={A note on persistency of excitation},
  author={Willems, Jan C and Rapisarda, Paolo and Markovsky, Ivan and De Moor, Bart LM},
  journal={Systems \& Control Letters},
  volume={54},
  number={4},
  pages={325--329},
  year={2005},
  publisher={Elsevier}
}

@article{de2019formulas,
  title={Formulas for data-driven control: {Stabilization}, optimality, and robustness},
  author={De Persis, Claudio and Tesi, Pietro},
  journal={IEEE Transactions on Automatic Control},
  volume={65},
  number={3},
  pages={909--924},
  year={2019},
  publisher={IEEE}
}

@article{roy2017combined,
  title={Combined {MRAC} for unknown {MIMO LTI} systems with parameter convergence},
  author={Roy, Sayan Basu and Bhasin, Shubhendu and Kar, Indra Narayan},
  journal={IEEE Transactions on Automatic Control},
  volume={63},
  number={1},
  pages={283--290},
  year={2017},
  publisher={IEEE}
}

@article{cho2017composite,
  title={Composite model reference adaptive control with parameter convergence under finite excitation},
  author={Cho, Namhoon and Shin, Hyo-Sang and Kim, Youdan and Tsourdos, Antonios},
  journal={IEEE Transactions on Automatic Control},
  volume={63},
  number={3},
  pages={811--818},
  year={2017},
  publisher={IEEE}
}

@article{van2023informativity,
  title={The informativity approach: To data-driven analysis and control},
  author={van Waarde, Henk J and Eising, Jaap and Camlibel, M Kanat and Trentelman, Harry L},
  journal={IEEE Control Systems Magazine},
  volume={43},
  number={6},
  pages={32--66},
  year={2023},
  publisher={IEEE}
}

@ARTICLE{wang2025necessary,
  author={Wang, Jiwei and Baldi, Simone and van Waarde, Henk J.},
  journal={IEEE Transactions on Automatic Control}, 
  title={Necessary and Sufficient Conditions for Data-Driven Model Reference Control}, 
  year={2025},
  volume={70},
  number={4},
  pages={2659-2666},
  keywords={Data models;Linear matrix inequalities;Noise measurement;Symmetric matrices;Stability analysis;Closed loop systems;Vectors;Numerical stability;Convergence;Computer science;Data informativity;data-driven control;model reference control (MRC);quadratic matrix inequalities},
  doi={10.1109/TAC.2024.3490669}}

@article{van2021beyond,
  title={Beyond persistent excitation: {Online} experiment design for data-driven modeling and control},
  author={van Waarde, Henk J},
  journal={IEEE Control Systems Letters},
  volume={6},
  pages={319--324},
  year={2021},
  publisher={IEEE}
}

@book{chen2012optimal,
  title={Optimal Sampled-data Control Systems},
  author={Chen, Tongwen and Francis, Bruce A},
  year={2012},
  publisher={Springer Science \& Business Media}
}

@inproceedings{lopez2022continuous,
  title={On a continuous-time version of {Willems'} lemma},
  author={Lopez, Victor G and M{\"u}ller, Matthias A},
  booktitle={Proceedings of the IEEE Conference on Decision and Control},
  pages={2759--2764},
  year={2022},
}

@article{berberich2021data,
  title={Data-driven model predictive control with stability and robustness guarantees},
  author={Berberich, Julian and K{\"o}hler, Johannes and M{\"u}ller, Matthias A and Allg{\"o}wer, Frank},
  journal={IEEE Transactions on Automatic Control},
  volume={66},
  number={4},
  pages={1702--1717},
  year={2021},
  publisher={IEEE}
}

@article{rapisarda2023fundamental,
  title={A “fundamental lemma” for continuous-time systems, with applications to data-driven simulation},
  author={Rapisarda, Paolo and Camlibel, M Kanat and van Waarde, Henk J},
  journal={Systems \& Control Letters},
  volume={179},
  pages={105603},
  year={2023},
  publisher={Elsevier}
}

@article{rapisarda2022persistency,
  title={A persistency of excitation condition for continuous-time systems},
  author={Rapisarda, Paolo and Camlibel, M Kanat and van Waarde, Henk J},
  journal={IEEE Control Systems Letters},
  volume={7},
  pages={589--594},
  year={2022},
  publisher={IEEE}
}

@article{rotulo2022online,
  title={Online learning of data-driven controllers for unknown switched linear systems},
  author={Rotulo, Monica and De Persis, Claudio and Tesi, Pietro},
  journal={Automatica},
  volume={145},
  pages={110519},
  year={2022},
  publisher={Elsevier}
}

@ARTICLE{lopez2024input,
  title={An input-output continuous-time version of {Willems'} lemma},
  author={Lopez, Victor G and M{\"u}ller, Matthias A and Rapisarda, Paolo},
  journal={IEEE Control Systems Letters}, 
  year={2024},
  volume={8},
  pages={916-921},
  doi={10.1109/LCSYS.2024.3406057}}

@ARTICLE{eising2025sampling,
  author={Eising, Jaap and Cort{\'e}s, Jorge},
  journal={IEEE Transactions on Automatic Control}, 
  title={When Sampling Works in Data-Driven Control: Informativity for Stabilization in Continuous Time}, 
  year={2025},
  volume={70},
  number={1},
  pages={565-572},
  doi={10.1109/TAC.2024.3438348}}

@article{berberich2021datacontinuous,
  title={Data-driven analysis and control of continuous-time systems under aperiodic sampling},
  author={Berberich, Julian and Wildhagen, Stefan and Hertneck, Michael and Allg{\"o}wer, Frank},
  journal={IFAC-PapersOnLine},
  volume={54},
  number={7},
  pages={210--215},
  year={2021},
  publisher={Elsevier}
}

@article{rapisarda2023orthogonal,
  title={Orthogonal polynomial bases for data-driven analysis and control of continuous-time systems},
  author={Rapisarda, Paolo and van Waarde, Henk J and Camlibel, M Kanat},
  journal={IEEE Transactions on Automatic Control},
  volume={69},
  number={7},
  pages={4307--4319},
  year={2023},
  publisher={IEEE}
}

@article{ohta2024data,
  title={Data informativity of continuous-time systems by sampling using linear functionals},
  author={Ohta, Yoshito},
  journal={IFAC-PapersOnLine},
  volume={58},
  number={17},
  pages={1--6},
  year={2024},
  publisher={Elsevier}
}

@book{willems1997introduction,
  title={Introduction to Mathematical Systems Theory: a Behavioral Approach},
  author={Willems, Jan C and Polderman, Jan W},
  year={1997},
  publisher={Springer Science \& Business Media}
}

@book{bernstein2009matrix,
  title={Matrix Mathematics: Theory, Facts, and Formulas},
  author={Bernstein, Dennis S},
  year={2009},
  publisher={Princeton University Press}
}

@article{ohsumi2002subspace,
  title={Subspace identification for continuous-time stochastic systems via distribution-based approach},
  author={Ohsumi, Akira and Kameyama, Kentaro and Yamaguchi, Ken-Ichi},
  journal={Automatica},
  volume={38},
  number={1},
  pages={63--79},
  year={2002},
  publisher={Elsevier}
}

@article{moustakis2018adaptive,
  title={An adaptive design for quantized feedback control of uncertain switched linear systems},
  author={Moustakis, Nikolaos and Yuan, Shuai and Baldi, Simone},
  journal={International Journal of Adaptive Control and Signal Processing},
  volume={32},
  number={5},
  pages={665--680},
  year={2018},
  publisher={Wiley Online Library}
}

@article{ohta2011stochastic,
  title={Stochastic system transformation using generalized orthonormal basis functions with applications to continuous-time system identification},
  author={Ohta, Yoshito},
  journal={Automatica},
  volume={47},
  number={5},
  pages={1001--1006},
  year={2011},
  publisher={Elsevier}
}

@ARTICLE{eising2025data,
  author={Eising, Jaap and Liu, Shenyu and Martínez, Sonia and Cortés, Jorge},
  journal={IEEE Transactions on Automatic Control}, 
  title={Data-Driven Mode Detection and Stabilization of Unknown Switched Linear Systems}, 
  year={2025},
  volume={70},
  number={6},
  pages={3830-3845},
  keywords={Switches;Control systems;Switched systems;Noise measurement;Noise;Asymptotic stability;Adaptation models;Linear systems;Current measurement;Particle measurements;Data-driven control;linear systems;switched systems},
  doi={10.1109/TAC.2024.3520419}}

@ARTICLE{zhao2025data,
  author={Zhao, Feiran and Dörfler, Florian and Chiuso, Alessandro and You, Keyou},
  journal={IEEE Transactions on Automatic Control}, 
  title={Data-Enabled Policy Optimization for Direct Adaptive Learning of the {LQR}}, 
  year={2025},
  volume={70},
  number={11},
  pages={7217-7232},
  keywords={Convergence;Adaptive learning;Adaptation models;Costs;Signal to noise ratio;Adaptive control;Trajectory;Computational modeling;Regulators;Training;Adaptive control;direct data-driven control;linear quadratic regulator (LQR);policy optimization (PO)},
  doi={10.1109/TAC.2025.3569597}}

@BOOK{DBLSCT2025,
title={Data-Based Linear Systems and Control Theory},
author={H. J. van Waarde and M. K. Camlibel and H. L. Trentelman},
publisher={Kindle Direct Publishing},
year={2025},
edition={First},
ISBN={9798289885807},
url={https://henkvanwaarde.github.io/dblsct}}

@article{yucelen2011derivative,
  title={Derivative-free model reference adaptive control},
  author={Yucelen, Tansel and Calise, Anthony J},
  journal={Journal of Guidance, Control, and Dynamics},
  volume={34},
  number={4},
  pages={933--950},
  year={2011}
}



\end{document}